\documentclass[11pt,reqno]{amsart}
\usepackage{amssymb,amsmath,amsthm,amsfonts}
\usepackage[english]{babel}
\usepackage{mathrsfs,a4wide,dsfont}
\usepackage[utf8]{inputenc}
\usepackage{color}
\usepackage{comment}
\usepackage{bm}
\usepackage{subcaption}
\usepackage{graphicx}
\graphicspath{{fig/}}

\theoremstyle{plain}
\newtheorem{theorem}{Theorem}[section]

\newtheorem{remark}[theorem]{Remark}

\theoremstyle{definition}
\theoremstyle{remark}
\numberwithin{equation}{section}

%Insiemi numerici
\newcommand{\R}{{\mathbb R}}

\newcommand{\N}{{\mathbb N}}

\renewcommand{\k}{\kappa}

%Definizioni ad hoc

    \let\TeXchi\chi
\newbox\chibox
\setbox0 \hbox{\mathsurround0pt $\TeXchi$}
\setbox\chibox \hbox{\raise\dp0 \box 0 }
\def\chi{\copy\chibox}

\title[Variational analysis of inextensible elastic curves]{Variational analysis of inextensible elastic curves}

\author[G.\,Bevilacqua]
{Giulia Bevilacqua}
\address[G.\,Bevilacqua]{MOX-Dipartimento di Matematica, Politecnico di Milano, via Bonardi 9, I-20133 Milano, Italy.}
\email{giulia.bevilacqua@polimi.it - giulia.bevilacqua1993@gmail.com}

\author[L.\,Lussardi]
{Luca Lussardi}
\address[L.\,Lussardi]{Dipartimento di Scienze Matematiche ``G.L.\,Lagrange'', Politecnico di Torino, c.so Duca degli Abruzzi 24, I-10129 Torino, Italy.}
\email[]{luca.lussardi@polito.it}

\author[A.\,Marzocchi]
{Alfredo Marzocchi}
\address[A.\,Marzocchi]{Dipartimento di Matematica e Fisica ``N.\,Tartaglia'', Universit\`a Cattolica del Sacro Cuore, via Musei 41, I-20121 Brescia, Italy.}
\email[]{alfredo.marzocchi@unicatt.it}

\begin{document}

\baselineskip3.4ex

\vspace{0.5cm}
\begin{abstract}
{\small We minimize elastic energies on framed curves which penalize both curvature and torsion. We also discuss critical points using the infinite dimensional version of the Lagrange multipliers' method. Finally, some examples arising from the applications are discussed and also numerical experiments are presented. 
\vskip .3truecm
\noindent Keywords: Framed curves, inextensible curves, curvature, torsion, critical points.
\vskip.1truecm
\noindent 2010 Mathematics Subject Classification: 49J05, 46G05, 53A04, 74B20.}
\end{abstract}

\maketitle

\section{Introduction}
The study of elastic curves was initiated in 1691 by Jacob Bernoulli and it was continued by Euler who introduced, in his book of 1744, the methods of Calculus of Variations. In his masterpiece, Euler introduced a complete characterization of {\em elasticae curves}. Since then, the name {\em elasticae} refers to curves which are critical points for the energy functional
\[
\int_{\bm r} \k^2\,d\ell\,,
\]
where $\k$ is the curvature of the curve ${\bm r}$. Since the fundamental papers by Langer and Singer \cite{LS83,LS,LS_85}, where the equations of elasticae have been integrated, the study of the Euler functional has been very vast. We refer here, for instance, to a recent and very interesting research on the elastic networks (see \cite{ANP,GMP} and references therein). 

Elastic energies play an important role in physical applications: we just mention, for instance, the study of slender biological systems, like DNA, knotted or unknotted proteins \cite{CS,FNS}, or the construction of engineering structures, like cables or pipelines \cite{SH_nature}. 

In the literature, we can find also energy functionals that penalize both curvature and torsion. For instance, in 1930 Sadowsky \cite{S1,S2} (see \cite{HF1,HF2} for an English translation) studied the equilibria of a developable M\"obius strip by minimizing the bending energy. He argued that when the M\"obius strip shrinks to its centerline, the energy reduces to a functional which depends on the curvature and the torsion of the centerline itself. The original form of the Sadowsky energy functional is given by 
\[
\int_{\bm r} \frac{(\k^2+\tau^2)^2}{\k^2}\,d\ell\,,
\]
where $\tau$ is the torsion of ${\bm r}$. Later, also Langer and Singer \cite{LS_96} considered an energy functional which penalizes both the curvature and the torsion of the centerline of an elastic rod. Precisely, they considered the functional
\[
\lambda_1 \int_{\bm r}\,d\ell+\lambda_2\int_{\bm r}\tau\,d\ell+\lambda_3\int_{\bm r} \frac{\k^2}{2}\,d\ell\,.
\]

In this paper, we want to study a more general energy density function depending both on the curvature $\k$ and the torsion $\tau$ of the curve. In other words, we are dealing with the following type of elastic energy functional:
\begin{equation}\label{F}
\int_{\bm r} f(\k,\tau)\,d\ell.
\end{equation}
We are interested in the existence of minimizers of \eqref{F} among closed curves with fixed length, and in a characterization of its critical points. The corresponding Euler-Lagrange equations without constraints have already been obtained but, up to our knowledge, for curves of class at least $C^2$: for instance, in \cite{CCG, SH} the authors employ the Serret-Frenet frame to describe the geometry of the curves and to compute the first variation. Moreover, we obtain a system of first-order differential equations which is not in normal form but embedding the closure of the curve, a fact generally difficult to implement.

Differently from classical approaches \cite{LS_85}, where the independent variable of the energy functional is a parametrized curve and in order to introduce a model for an elastic curve as physical as possible \cite{dill_protein}, here we require that the elastic curve is $C^1$ but not necessarily $C^2$. By this, we cannot adopt the Serret-Frenet frame description, for which the regularity of the curve has to be at least $C^2$. We adopt here the approach of the {\em framed curves} with the constraints of $C^1$-closedness and a natural condition $\bm{t}' \cdot \bm{b}=0$. Framed curves were introduced by Schuricht et al.\,\cite{GMSM} to describe the physical behavior of elastic curves under additional topological constraints (see also \cite{Schuricht}), while the condition $\bm{t}' \cdot \bm{b}=0$ arises in a natural way from a Gamma-limit procedure exploited by Freddi et al.\, \cite{FHMP} to investigate the dimensional reduction of an elastic M\"obius strip. 

The paper is organized as follows. First of all, in Section \ref{sec:framed}, we introduce the mathematical setting of the framed curves showing how to reconstruct a space curve starting from its (weak) curvature and torsion and we introduce the elastic energy functional. Next, in Section \ref{sec:minimizer}, we prove the first main result, i.e.\,the existence of energy minimizers. Then, in Section \ref{sec:critical_points}, we find as a second result, the first-order necessary conditions for minimizers using essentially the infinite-dimensional version of the Lagrange multipliers' method. Finally, in Section \ref{sec:examples}, we consider some  examples arising from biological or engineering applications and we perform numerical examples to visualize the shape of critical points.

The variational analysis of functionals of type \eqref{F} is a fundamental preliminary in view to consider the more complicated physical situation where a soap film spans an elastic inextensible curve. This study will be managed in the spirit of the one carried out for the Kirchhoff-Plateau problem (see \cite{GFF,GLF,BLM1,BLM2,BLM3} and references therein) and it will be the content of a forthcoming paper.

\section{Framed curves and elastic energy}
\label{sec:framed}

We introduce framed curves following, up to some variants, the approach presented in \cite{GMSM}. We denote by $SO(3)$ the set of all $3\times 3$ rotation matrices: in other words, $({\bm u}|{\bm v}|{\bm w})\in SO(3)$ means that $\{{\bm u},{\bm v},{\bm w}\}$ is a positively oriented orthonormal basis of $\R^3$. Fix $L>0$ and $p>1$. On a triple $({\bm t}|{\bm n}|{\bm b})\in  W^{1,p}((0,L);SO(3))$ we put the following constraints: 
\begin{equation}\label{c1} 
{\bm t}'\cdot {\bm b}=0, \quad \text{a.e.\,on $(0,L)$},
\end{equation}
\begin{equation}\label{c2} 
\int_0^L{\bm t}\,ds=0,
\end{equation}
\begin{equation}\label{c3} 
{\bm t}(L)={\bm t}(0),
\end{equation}
where ${\bm t}(L)$ and ${\bm t}(0)$ are intended in the sense of traces. We let 
\[
W=\{({\bm t}|{\bm n}|{\bm b})\in  W^{1,p}((0,L);SO(3)) : \text{\eqref{c1}-\eqref{c2}-\eqref{c3} hold true}\}.
\]
Let $f\colon [0,L] \times \R \times \R \to\R$ be measurable. We define the energy functional $\mathcal E\colon W \to\R\cup\{+\infty\}$ as
\[
\mathcal E({\bm t}|{\bm n}|{\bm b})=\int_0^Lf(s,{\bm t}'\cdot {\bm n},{\bm n}'\cdot {\bm b})\,ds.
\] 

\subsection{Geometrical interpretation}

Fix $({\bm t}|{\bm n}|{\bm b})\in W$ and ${\bm x}_0 \in \R^3$. We consider the map ${\bm r}\colon [0,L]\to \R^3$ defined by 
\[
{\bm r}(s)={\bm x}_0+\int_0^s{\bm t}\,dr.
\]
In other words, ${\bm r}$ is the curve clamped at the point ${\bm x}_0$ and {\it generated} by the orthonormal frame $\{{\bm t},{\bm n},{\bm b}\}$. First of all ${\bm r}$ is parametrized by the arclength since $|{\bm r}'|=|{\bm t}|=1$. Condition \eqref{c2} says that ${\bm r}$ is a closed curve, that is ${\bm r}(0)={\bm r}(L)$. Moreover, condition \eqref{c3} says that the tangent vector to ${\bm r}$ is continuous, that is ${\bm r}'(0)={\bm r}'(L)$. We also point out that ${\bm r}$ belongs to $W^{2,p}((0,L);\R^3)$ but, in general, it does not belong to $W^{3,p}((0,L);\R^3)$ even if ${\bm n}\in W^{1,p}((0,L);\R^3)$. Hence, the regularity of admissible curves is in between $C^1$ and $C^2$. Finally, we let 
\[
\k={\bm t}'\cdot {\bm n}, \quad \tau={\bm n}'\cdot {\bm b}.
\]
Notice that $\k,\tau\in L^p(0,L)$. It is easy to see that condition \eqref{c1} implies that the following system holds 
\begin{equation}\label{ode}
\left\{\begin{array}{lc}
{\bm t}'=\k{\bm n},\\
{\bm n}'=-\k{\bm t}+\tau{\bm b},\\
{\bm b}'=-\tau{\bm n},
\end{array}\right.
\end{equation}
which looks like the usual {\it Serret-Frenet system} of ${\bm r}$. For that reason, $\k$ and $\tau$ can be regarded as the {\it (signed) weak curvature} of ${\bm r}$ and the {\it weak torsion} of ${\bm r}$ respectively. However, we remark that while in classical Differential Geometry the torsion of a space curve is not defined at a point where the curvature vanishes, for us the quantity ${\bm n}'\cdot {\bm b}$ is always defined.
%, and it measures, in general, the twisting of the moving frame $({\bm t}|{\bm n}|{\bm b})$. 

\section{Minimizers of $\mathcal E$}
\label{sec:minimizer}

Our first main result is the existence of minimizers for $\mathcal E$. 

\begin{theorem}\label{main1}
Assume that: 
\begin{equation}\label{f0}
\text{$f(\cdot,a,b)\in L^1(0,L)$ for any $a,b\in \R$},
\end{equation}
\begin{equation}\label{f1}
\text{$f(s,\cdot)$ is continuous and convex for any $s\in [0,L]$}, 
\end{equation}
\begin{equation}\label{f2}
\text{$f(s,a,b)\geq c_1|a|^p+c_2|b|^p+c_3$ for any $a,b\in \R$},
\end{equation}
for some $c_1,c_2>0$, $c_3\in \R$. Then $\mathcal E$ has a minimizer on $W$.
\end{theorem}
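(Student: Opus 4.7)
The plan is the direct method of the calculus of variations.

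First I would verify that $W$ is nonempty and that $\inf_W \mathcal E>-\infty$: a flat round circle of circumference $L$, with its usual tangent/normal and a constant binormal, produces an admissible triple with $\k\equiv 2\pi/L$, $\tau\equiv 0$ that satisfies \eqref{c1}--\eqref{c3} and has finite energy by \eqref{f0}, while the lower bound \eqref{f2} gives $\mathcal E\geq c_3 L$ on $W$. Hence a minimizing sequence $\{({\bm t}_k|{\bm n}_k|{\bm b}_k)\}\subset W$ exists.

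Next I would exploit coercivity to extract a limit. Pointwise orthonormality bounds the sequence in $L^\infty$. Since $\mathcal E({\bm t}_k|{\bm n}_k|{\bm b}_k)$ is bounded, \eqref{f2} forces $\k_k={\bm t}'_k\cdot{\bm n}_k$ and $\tau_k={\bm n}'_k\cdot{\bm b}_k$ to be bounded in $L^p$; combining this with the system \eqref{ode} and $|{\bm t}_k|=|{\bm n}_k|=|{\bm b}_k|=1$ yields uniform $L^p$ bounds on ${\bm t}'_k$, ${\bm n}'_k$, ${\bm b}'_k$. Thus the sequence is bounded in $W^{1,p}((0,L);\R^{3\times 3})$. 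For $p>1$, reflexivity together with the compact embedding $W^{1,p}(0,L)\hookrightarrow C^0([0,L])$ allows me to pass to a (non-relabeled) subsequence with $({\bm t}_k|{\bm n}_k|{\bm b}_k)\rightharpoonup ({\bm t}|{\bm n}|{\bm b})$ in $W^{1,p}$ and uniformly on $[0,L]$, and uniform convergence transmits orthonormality and positive orientation to the limit, so $({\bm t}|{\bm n}|{\bm b})\in W^{1,p}((0,L);SO(3))$.

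Then I would pass the three constraints to the limit. Constraint \eqref{c2} is linear in ${\bm t}$, hence closed under weak $L^p$-convergence; constraint \eqref{c3} is preserved because uniform convergence preserves the traces at $0$ and $L$. For \eqref{c1} I would invoke a weak-strong pairing: since ${\bm t}'_k\rightharpoonup {\bm t}'$ weakly in $L^p$ and ${\bm b}_k\to{\bm b}$ in $L^\infty$, the product satisfies ${\bm t}'_k\cdot{\bm b}_k\rightharpoonup {\bm t}'\cdot{\bm b}$ weakly in $L^p$, forcing ${\bm t}'\cdot{\bm b}=0$ a.e. The same argument identifies the weak $L^p$-limits $\k_k\rightharpoonup{\bm t}'\cdot{\bm n}=\k$ and $\tau_k\rightharpoonup {\bm n}'\cdot{\bm b}=\tau$.

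Finally, lower semicontinuity. Hypotheses \eqref{f0}--\eqref{f1} make $f$ a Carathéodory integrand convex in $(a,b)$, and \eqref{f2} supplies an integrable lower bound; the classical Ioffe-Tonelli theorem then gives sequential weak lower semicontinuity of $(\k,\tau)\mapsto\int_0^L f(s,\k,\tau)\,ds$ on $L^p\times L^p$, and combined with the weak convergence of $(\k_k,\tau_k)$ this delivers $\mathcal E({\bm t}|{\bm n}|{\bm b})\leq\liminf_k\mathcal E({\bm t}_k|{\bm n}_k|{\bm b}_k)$, so $({\bm t}|{\bm n}|{\bm b})$ is a minimizer. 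I expect the only truly delicate point to be the rigorous identification of the weak $L^p$-limits of the bilinear quantities $\k_k$, $\tau_k$ and of ${\bm t}'_k\cdot{\bm b}_k$, since this is where the linear $W^{1,p}$-theory has to be coupled with the nonlinear $SO(3)$-valued constraint; the one-dimensional compact embedding and $p>1$ are precisely what make this coupling work.
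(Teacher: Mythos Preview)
Your proposal is correct and follows essentially the same route as the paper: the direct method via a circle competitor, $W^{1,p}$-compactness combined with the compact embedding into $C^0$ to get uniform convergence, weak--strong pairing to pass the bilinear constraints (in particular \eqref{c1}) and to identify the limits of $\k_k,\tau_k$, and finally weak lower semicontinuity of the convex integrand. If anything, you are more explicit than the paper in two places: you spell out how the coercivity \eqref{f2} on $(\k,\tau)$ translates, via the Serret--Frenet system \eqref{ode}, into an $L^p$ bound on the full derivatives $({\bm t}',{\bm n}',{\bm b}')$, and you name the lower semicontinuity result (Ioffe--Tonelli) that the paper simply invokes as a consequence of \eqref{f1}.
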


\begin{proof}
We divide the proof into three steps.
\\
\\
{\sl Step 1}. We claim that $\inf_W\mathcal E<+\infty$. Consider
\[
{\bm t}^*(s)=-\sin \left(\frac{2\pi s}{L}\right)\,{\bm e}_1+\cos \left(\frac{2\pi s}{L}\right)\,{\bm e}_2,
\]
\[
{\bm n}^*(s)=-\cos \left(\frac{2\pi s}{L}\right)\,{\bm e}_1-\sin \left(\frac{2\pi s}{L}\right)\,{\bm e}_2,
\]
and 
\[
{\bm b}^*(s)={\bm e}_3
\]
where $\{{\bm e}_1,{\bm e}_2,{\bm e}_3\}$ is the canonical basis of $\R^3$. It is easy to see that $({\bm t}^*|{\bm n}^*|{\bm b}^*)\in W$. Moreover, 
\[
\k=({\bm t}^*)'\cdot {\bm n}^*=\frac{4\pi^2}{L^2}, \quad \tau=({\bm n}^*)'\cdot {\bm b}^*=0.
\]
Hence 
\[
\mathcal E({\bm t}^*|{\bm n}^*|{\bm b}^*)=\int_0^Lf\left(s,\frac{4\pi^2}{L^2},0\right)\,ds\stackrel{\eqref{f0}}{<}+\infty
\]
from which the claim follows.
\\
\\
{\sl Step 2}. We prove that $W$ is sequentially closed with respect to the weak convergence of $W^{1,p}((0,L);\R^{3\times 3})$. In order to prove this, let $({\bm t}_h|{\bm n}_h|{\bm b}_h)$ be a sequence in $W$ that converges weakly in $W^{1,p}((0,L);\R^{3\times 3})$ to $({\bm t}|{\bm n}|{\bm b})$. In particular, $\|{\bm t}_h\|_{1,p},\|{\bm n}_h\|_{1,p},\|{\bm b}_h\|_{1,p}$ are bounded. Taking into account the Sobolev compact embedding $W^{1,p}((0,L);\R^3) \hookrightarrow C^0([0,L];\R^3)$ we can say that, up to a subsequence not relabeled, $({\bm t}_h|{\bm n}_h|{\bm b}_h) \to ({\bm t}|{\bm n}|{\bm b})$ uniformly on $[0,L]$. Now, since for any $s\in [0,L]$ we have $({\bm t}_h(s)|{\bm n}_h(s)|{\bm b}_h(s))\in SO(3)$ and $SO(3)$ is closed in $\R^{3\times 3}$ we deduce that $({\bm t}(s)|{\bm n}(s)|{\bm b}(s))\in SO(3)$ for any $s\in [0,L]$ since the uniform convergence implies the pointwise convergence. For the same reason condition \eqref{c2} is preserved in the limit as well as the constraint \eqref{c3}. It remains to prove that \eqref{c1} passes to the limit. Since ${\bm t}_h'\to {\bm t}'$ weakly in $L^p(0,L)$ and ${\bm b}_h \to {\bm b}$ uniformly on $[0,L]$ we can say that ${\bm t}_h'\cdot {\bm b}_h \to {\bm t}'\cdot {\bm b}$ weakly in $L^p(0,L)$. Hence 
\[
\|{\bm t}'\cdot {\bm b}\|_p \le \liminf_{h\to+\infty}\|{\bm t}_h'\cdot {\bm b}_h\|_p=0
\]
from which we obtain ${\bm t}'\cdot {\bm b}=0$.
\\
\\
{\sl Step 3}. The proof now uses the Direct Method of the Calculus of Variations. Let $({\bm t}_h|{\bm n}_h|{\bm b}_h)$ be a minimizing sequence for $\mathcal E$ on $W$, that is $({\bm t}_h|{\bm n}_h|{\bm b}_h)\in W$ for any $h\in \N$ and 
\[
\lim_{h\to+\infty}\mathcal E({\bm t}_h|{\bm n}_h|{\bm b}_h)=\inf_W\mathcal E.
\]
Since $\inf_W\mathcal E<+\infty$, and by \eqref{f2} we can say that $\|({\bm t}_h|{\bm n}_h|{\bm b}_h)\|_{1,p}$ is bounded. Then, up to a subsequence (not relabeled), we get $({\bm t}_h|{\bm n}_h|{\bm b}_h) \rightharpoonup  ({\bm t}|{\bm n}|{\bm b}) \in W$ because of the weak closure of $W$. As before, notice that ${\bm t}_h'\cdot {\bm n}_h \to {\bm t}'\cdot {\bm n}$ and ${\bm n}_h'\cdot {\bm b}_h \to {\bm n}'\cdot {\bm b}$ both weakly in $L^p(0,L)$. Since condition \eqref{f1} guarantees the lower semicontinuity of $\mathcal E$ with respect to the weak topology of $L^p(0,L)$ we conclude that 
\[
\mathcal E({\bm t}|{\bm n}|{\bm b})\le \liminf_{h\to+\infty}\mathcal E({\bm t}_h|{\bm n}_h|{\bm b}_h)=\inf_W\mathcal E
\]
which ends the proof. 
\end{proof}

\section{Critical points of $\mathcal E$}\label{sec:critical_points}

In this section we want to find the first-order necessary conditions for minimizers of $\mathcal E$. In order to do this, we recall the infinite-dimensional version of the Lagrange multipliers' method. In what follows, $Y'$ denotes the topological dual of $Y$.

\begin{theorem}\label{Lag}
Let $X,Y$ be two real Banach spaces, $\mathcal F\in C^1(X)$ and $\mathcal G \in C^1(X;Y)$. Assume that $\mathcal G'(x)\ne 0$ whenever $\mathcal G(x)=0$. Let $x_0 \in X$ be such that 
\[
\mathcal F(x_0)=\min\{\mathcal F(x) : \mathcal G(x_0)=0\}. 
\]
Then there exists a Lagrange multiplier $\lambda \in Y'$ such that
\[
\mathcal F'(x_0)=\lambda(\mathcal G'(x_0)).
\]
\end{theorem}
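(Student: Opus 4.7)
The plan is the standard two-step reduction: first translate the constrained minimality of $x_0$ into the assertion that the linear functional $\mathcal F'(x_0)$ annihilates $\ker T$, where $T:=\mathcal G'(x_0)\in\mathcal L(X,Y)$; then use Banach-space duality to rewrite this annihilator condition as the factorization $\mathcal F'(x_0)=\lambda\circ T$ for some $\lambda\in Y'$. I read the nondegeneracy hypothesis ``$\mathcal G'(x_0)\neq 0$'' in its functional quantitative form, namely that $T$ is surjective onto $Y$, which is the usual constraint qualification under which the multiplier rule holds in infinite dimensions.

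For the first step, fix $v\in\ker T$ and seek a $C^1$ curve $\gamma\colon(-\delta,\delta)\to X$ satisfying $\gamma(0)=x_0$, $\gamma'(0)=v$ and $\mathcal G(\gamma(t))=0$ for all $|t|<\delta$. Writing $\gamma(t)=x_0+tv+\eta(t)$, this reduces to solving $\mathcal G(x_0+tv+\eta)=0$ with $\eta(t)=o(t)$. Since $T$ is a surjection between Banach spaces, this can be handled either by the Banach implicit function theorem (after splitting $X=\ker T\oplus Z$ so that $T|_Z\colon Z\to Y$ is a topological isomorphism, available e.g.\ when $\ker T$ admits a closed complement), or, more generally, by the Lyusternik--Graves theorem, which produces a Lipschitz local right inverse for $\mathcal G$ at $x_0$ and yields exactly the required $\eta$. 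Differentiating $t\mapsto\mathcal F(\gamma(t))$ at $t=0$ and using that $x_0$ minimizes $\mathcal F$ along the feasible set gives $\langle\mathcal F'(x_0),v\rangle=0$, so $\mathcal F'(x_0)\in(\ker T)^{\perp}\subset X'$.

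The extraction of $\lambda$ is then pure functional analysis. Surjectivity of $T$ implies that $\mathrm{ran}(T)=Y$ is closed, so by the closed range theorem one has $(\ker T)^{\perp}=\mathrm{ran}(T^{*})$, where $T^{*}\colon Y'\to X'$ is the Banach adjoint. Therefore there exists $\lambda\in Y'$ with $\mathcal F'(x_0)=T^{*}\lambda=\lambda\circ\mathcal G'(x_0)$, as required.

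The main obstacle is precisely the construction in the first step: unlike the finite-dimensional or Hilbertian setting, a surjection between general Banach spaces need not admit a bounded linear right inverse, and the classical implicit function theorem cannot be applied without a topological complement of $\ker T$. The cleanest workaround is to replace the classical implicit function theorem by the Lyusternik/Graves result, which only requires surjectivity of $T$ and supplies a (nonlinear) Lipschitz local right inverse; this is enough to produce the admissible curve $\gamma$ with the prescribed initial velocity $v$, after which the annihilator and duality steps are routine.
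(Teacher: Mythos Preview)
The paper does not prove Theorem~\ref{Lag}; it is stated there without proof as a recalled tool, so there is no argument in the paper to compare yours against.

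That said, your sketch proves a \emph{different} theorem than the one written. You explicitly upgrade the hypothesis ``$\mathcal G'(x_0)\neq 0$'' to ``$\mathcal G'(x_0)$ is surjective,'' and this upgrade is not an innocent rephrasing: the statement as literally written is false whenever $\dim Y\geq 2$. For a two--line counterexample, take $X=Y=\R^2$, $\mathcal G(x,y)=(x^2,y)$ and $\mathcal F(x,y)=x$. The feasible set is $\{(0,0)\}$, and $\mathcal G'(0,0)=\left(\begin{smallmatrix}0&0\\0&1\end{smallmatrix}\right)\neq 0$, yet $\mathcal F'(0,0)=(1,0)$ does not lie in the range of $\mathcal G'(0,0)^{*}$, so no $\lambda\in Y'$ exists. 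Thus the paper's hypothesis is too weak and your reading of it as a constraint qualification is the only way to make the conclusion true.

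Under the surjectivity assumption your argument is the standard one and is correct: Lyusternik--Graves produces feasible curves tangent to every $v\in\ker T$ without needing $\ker T$ to be complemented, giving $\mathcal F'(x_0)\in(\ker T)^{\perp}$; and since $T$ is onto, the closed range theorem yields $(\ker T)^{\perp}=\mathrm{ran}\,T^{*}$, which is exactly the factorization $\mathcal F'(x_0)=\lambda\circ T$. The only point worth flagging is that in the paper's application (proof of Theorem~\ref{main2}) the authors verify merely that $\mathcal G'$ is nonzero, not that it is surjective; so the gap you patched in the abstract statement reappears there as an unchecked hypothesis.
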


We are ready to state and prove our second main result.

\begin{theorem}\label{main2}
Assume that $f$ is of class $C^1$ and satisfies 
\begin{equation}\label{f3}
f(s,a,b)\le c(1+|a|^p+|b|^p)
\end{equation}
and
\begin{equation}\label{f4}
|f_a(s,a,b)|\le c(1+|a|^{p-1}+|b|^{p-1}),\quad |f_b(s,a,b)|\le c(1+|a|^{p-1}+|b|^{p-1})
\end{equation}
for all $s\in [0,L]$ and any $a,b\in \R$ and for some $c\ge 0$ (here $f_a=\frac{\partial f}{\partial a}$ and $f_b=\frac{\partial f}{\partial b}$). Let $({\bm t}|{\bm n}|{\bm b})\in W$ be a minimizer of $\mathcal E$ and let $\k={\bm t}'\cdot {\bm n},\tau={\bm n}'\cdot {\bm b}$. Then, $f_a,f_b \in W^{1,1}(0,L)$ and there exist $\mu \in L^{p'}(0,L)$ with $\mu'\in L^p(0,L)$ and ${\bm \lambda}\in \R^3$ such that the following conditions hold a.e.\,on $(0,L)$:
\begin{equation}\label{elw}
\left\{\begin{array}{lll}
f_b(s,\k,\tau)'=\mu\k\\
-f_a(s,\k,\tau)'=\mu\tau+{\bm\lambda} \cdot {\bm n}\\
\k f_b(s,\k,\tau)-\tau f_a(s,\k,\tau)=-\mu'+{\bm\lambda} \cdot {\bm b}.
\end{array}\right.
\end{equation}
\end{theorem}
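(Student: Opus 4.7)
The strategy is to apply the Lagrange multiplier principle of Theorem \ref{Lag} after parametrizing the admissible variations tangent to the constraint set $W$.

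\textbf{Tangent parametrization.} Since $({\bm t}|{\bm n}|{\bm b})(s)\in SO(3)$ pointwise, near the minimizer every admissible deformation can be written as ${\bm R}_\e={\bm R}\exp(\e V)$ with $V\in W^{1,p}((0,L);\mathrm{skew}(3))$. Decomposing $V$ in the moving frame $\{{\bm t},{\bm n},{\bm b}\}$ via scalar coordinates $v_1,v_2,v_3$, one computes
\[
\delta{\bm t}=v_3{\bm n}-v_2{\bm b},\quad \delta{\bm n}=-v_3{\bm t}+v_1{\bm b},\quad \delta{\bm b}=v_2{\bm t}-v_1{\bm n},
\]
and, differentiating \eqref{ode},
\[
\delta\k=v_3'+\tau v_2,\quad \delta\tau=v_1'-\k v_2,\quad \delta({\bm t}'\cdot{\bm b})=-v_2'+\tau v_3-\k v_1.
\]
I work with test fields $v_i\in W^{1,p}_0(0,L)$, which preserves the tangent continuity condition \eqref{c3} to first order and eliminates boundary terms in the integrations by parts below.

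\textbf{Lagrange step.} I take the Banach space $X=W^{1,p}_0((0,L);\R^3)$ of triples $v=(v_1,v_2,v_3)$, let $\mathcal{F}$ be the energy $\mathcal{E}$ pulled back through the exponential chart $v\mapsto {\bm R}\exp(V)$, and define the constraint map
\[
\mathcal{G}(v)=\Bigl({\bm t}_v'\cdot{\bm b}_v,\;\int_0^L{\bm t}_v\,ds\Bigr)\in L^p(0,L)\times\R^3,
\]
which encodes \eqref{c1} and \eqref{c2}. Assumptions \eqref{f3}-\eqref{f4} yield $\mathcal{F},\mathcal{G}\in C^1$. After verifying the nondegeneracy hypothesis $\mathcal{G}'(0)\ne 0$, Theorem \ref{Lag} furnishes $\mu\in L^{p'}(0,L)$ and ${\bm\lambda}\in\R^3$ such that
\[
\mathcal{F}'(0)v=\int_0^L\mu\,\delta({\bm t}'\cdot{\bm b})\,ds+{\bm\lambda}\cdot\int_0^L\delta{\bm t}\,ds\qquad\text{for all } v\in X.
\]

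\textbf{Derivation of \eqref{elw}.} Substituting the linearizations and integrating by parts, the stationarity identity becomes
\begin{align*}
\int_0^L\bigl[f_a(v_3'+\tau v_2)+f_b(v_1'-\k v_2)\bigr]ds
&=\int_0^L\mu(-v_2'+\tau v_3-\k v_1)\,ds\\
&\quad+\int_0^L\bigl[({\bm\lambda}\cdot{\bm n})v_3-({\bm\lambda}\cdot{\bm b})v_2\bigr]ds.
\end{align*}
Testing separately against $v_1$, $v_2$, $v_3$ and appealing to the fundamental lemma of the calculus of variations, one obtains in the distributional sense the three identities $f_b'=\mu\k$, $-f_a'=\mu\tau+{\bm\lambda}\cdot{\bm n}$, and $\k f_b-\tau f_a=-\mu'+{\bm\lambda}\cdot{\bm b}$, which are precisely \eqref{elw} (up to the sign conventions for $\mu$ and ${\bm\lambda}$). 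The first two identities imply $f_a,f_b\in W^{1,1}(0,L)$, and the third one identifies the distributional derivative $\mu'$ and yields its integrability.

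\textbf{Main obstacle.} The genuinely technical step is the Banach chart construction: identifying a neighborhood of the minimizer inside $W$ with a neighborhood of the origin in $X$ via the exponential map on $SO(3)^{[0,L]}$ so that the Banach-space statement of Theorem \ref{Lag} is applicable, together with verification of the nondegeneracy $\mathcal{G}'(0)\ne 0$, which is essentially a controllability property for the linearization of \eqref{ode}. Once the setup is in place, the equations \eqref{elw} follow from a direct integration by parts and the fundamental lemma.
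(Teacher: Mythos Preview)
Your proof is correct and reaches the system \eqref{elw}, but by a genuinely different route from the paper. The paper works \emph{extrinsically}: it takes the ambient space $X=W^{1,p}((0,L);\R^{3\times 3})$ and packs \emph{all} constraints---including the pointwise $SO(3)$ condition via $|{\bm t}|^2=|{\bm n}|^2=1$, ${\bm t}\cdot{\bm n}=0$, ${\bm b}={\bm t}\times{\bm n}$---into the map $\mathcal G$. This produces, besides $\mu$ and ${\bm\lambda}$, further multipliers $\lambda_1,\lambda_2,\lambda_3\in L^{p'}$ and ${\bm\lambda}_4\in L^{p'}((0,L);\R^3)$, which are then eliminated by testing \eqref{fond} with ${\bm\eta}_i=\varphi{\bm t},\varphi{\bm n},\varphi{\bm b}$; the system \eqref{elw} appears only after one computes ${\bm\lambda}_4=-\mu{\bm t}'+f_b{\bm n}'$ and $\lambda_3=0$. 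Your intrinsic parametrization through the exponential chart on $SO(3)$ bypasses those auxiliary multipliers entirely: the $SO(3)$ constraint is absorbed into the chart, so only $\mu$ (for \eqref{c1}) and ${\bm\lambda}$ (for \eqref{c2}) remain, and \eqref{elw} falls out immediately from the three scalar tests $v_1,v_2,v_3$. The trade-off is that your approach front-loads the analytic work into verifying that $v\mapsto{\bm R}\exp V$ is a $C^1$ map between Banach spaces (which is routine here thanks to $W^{1,p}\hookrightarrow C^0$), while the paper's linear ambient setup needs no chart but leaves a larger system to disentangle afterward. Your parenthetical ``up to sign conventions'' is unnecessary: with your conventions the signs match \eqref{elw} exactly.
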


\begin{proof}
Let $X=W^{1,p}((0,L);\R^{3\times 3})$. The free variable in $X$ will be denoted again by $({\bm t}|{\bm n}|{\bm b})$. We define the functional $\mathcal F \colon X \to \R$ by 
\[
\mathcal{F}({\bm t}|{\bm n}|{\bm b}) =\int_0^L f(s,{\bm t}'\cdot {\bm n},{\bm n}'\cdot {\bm b})\,ds.
\]
Fix $({\bm{\bm \eta}}_1|{\bm {\bm \eta}}_2|{\bm {\bm \eta}}_3) \in X$. Thanks to \eqref{f4} we can differentiate under the integral sign obtaining 
\[
\begin{aligned}
&\frac{d}{d\sigma}\mathcal F({\bm t}+\sigma{\bm \eta}_1|{\bm n}+\sigma{\bm \eta}_2|{\bm b}+\sigma{\bm \eta}_3)_{|_{\sigma=0}}\\
&=\int_0^Lf_a(s,{\bm t}'\cdot {\bm n},{\bm n}'\cdot {\bm b})({\bm n}\cdot {\bm \eta}'_1+{\bm t}'\cdot {\bm \eta}_2)\,ds+\int_0^L f_b(s,{\bm t}'\cdot {\bm n},{\bm n}'\cdot {\bm b})({\bm b}\cdot {\bm \eta}'_2+{\bm n}'\cdot {\bm \eta}_3)\,ds\\
&\qquad =:L({\bm \eta}_1,{\bm \eta}_2,{\bm \eta}_3),
\end{aligned}
\]
where $L$ is a linear operator. It is easy to see that 
\[
|L({\bm \eta}_1|{\bm \eta}_2|{\bm \eta}_3)|\le m\|({\bm t}|{\bm n}|{\bm b})\|_X\|({\bm \eta}_1|{\bm \eta}_2|{\bm \eta}_3)\|_X
\]
for a suitable constant $m>0$ by \eqref{f4}, the H\"older inequality and the continuous embeddings 
\[
W^{1,p}((0,L);\R^3) \hookrightarrow C^0([0,L];\R^3), \quad L^p((0,L);\R^3) \hookrightarrow L^1((0,L);\R^3).
\] 
Therefore, $L$ is also continuous and 
\[
|L({\bm \eta}_1|{\bm \eta}_2|{\bm \eta}_3)|\le m\|({\bm t}|{\bm n}|{\bm b})\|_X
\]
whenever $\|({\bm \eta}_1|{\bm \eta}_2|{\bm \eta}_3)\|_X=1$. Applying standard results of Nonlinear Analysis (see for instance \cite[Sec.\,1.3]{BS}), we can then conclude that $\mathcal F \in C^1(X)$ and 
\[
\begin{aligned}
&\mathcal F'({\bm t}|{\bm n}|{\bm b})({\bm \eta}_1|{\bm \eta}_2|{\bm \eta}_3)\\
&=\int_0^Lf_a(s,{\bm t}'\cdot {\bm n},{\bm n}'\cdot {\bm b})({\bm n}\cdot {\bm \eta}'_1+{\bm t}'\cdot {\bm \eta}_2)\,ds+\int_0^L f_b(s,{\bm t}'\cdot {\bm n},{\bm n}'\cdot {\bm b})({\bm b}\cdot {\bm \eta}'_2+{\bm n}'\cdot {\bm \eta}_3)\,ds
\end{aligned}
\]
for any $({\bm \eta}_1|{\bm \eta}_2|{\bm \eta}_3) \in X$. Next, we consider the constraints. We let 
\[
Y=L^p(0,L) \times L^p(0,L) \times L^p(0,L) \times L^p((0,L);\R^3)\times L^p(0,L) \times \R^3 \times \R^3
\]
equipped with the product topology in order to get a Banach space. We define $\mathcal G \colon  X\to Y$ as 
\[
\mathcal G({\bm t}|{\bm n}|{\bm b})=\left({\bm t} \cdot {\bm t}-1,{\bm n} \cdot {\bm n}-1,{\bm t}\cdot {\bm n},{\bm b}-{\bm t}\times {\bm n},{\bm t}' \cdot {\bm b},\int_0^L {\bm t}\,ds,{\bm t}(L)-{\bm t}(0)\right).
\]
Using the same argument as before, we can easily see that $\mathcal G \in C^1(X,Y)$ and  
\[
\begin{aligned}
\mathcal G'&({\bm t}|{\bm n}|{\bm b})({\bm \eta}_1|{\bm \eta}_2|{\bm \eta}_3)\\
&=\bigg(2{\bm t}\cdot {\bm \eta}_1,2{\bm b}\cdot {\bm \eta}_2,{\bm t}\cdot {\bm \eta}_2+{\bm n}\cdot {\bm \eta}_1,{\bm \eta}_3+{\bm n}\times {\bm \eta}_1-{\bm t}\times {\bm \eta}_2,{\bm b} \cdot {\bm \eta}'_1+{\bm t}'\cdot {\bm \eta}_3,\\
&\qquad \quad \int_0^L {\bm \eta}_1\,ds,{\bm \eta}_1(L)-{\bm \eta}_1(0)\bigg).
\end{aligned}
\]
Moreover, $\mathcal G'({\bm t}|{\bm n}|{\bm b})\ne 0$ for any $({\bm t}|{\bm n}|{\bm b}) \in X$ such that $\mathcal G({\bm t}|{\bm n}|{\bm b})=0$. Then, by construction a minimizer of $\mathcal E$ is a constrained minimizer of $\mathcal F$ on $\{\mathcal G=0\}$. From now on, $({\bm t}|{\bm n}|{\bm b})$ denotes such a minimizer. For simplicity of notation we also let $f_a=f_a(s,\k,\tau)$ and $f_b=f_b(s,\k,\tau)$. Applying Theorem \ref{Lag}, we can say that there exist Lagrange multipliers $\lambda_1,\lambda_2,\lambda_3 \in L^{p'}(0,L)$, ${\bm \lambda}_4\in L^{p'}((0,L);\R^3)$, $\mu\in L^{p'}(0,L)$, ${\bm \lambda}\in \R^3$ such that
\begin{equation}\label{fond}
\begin{aligned}
&\int_0^Lf_a{\bm n}\cdot {\bm \eta}'_1\,ds+\int_0^L f_a{\bm t}'\cdot {\bm \eta}_2\,ds+\int_0^L f_b{\bm b}\cdot {\bm \eta}'_2\,ds+\int_0^L f_b {\bm n}' \cdot {\bm \eta}_3\,ds\\
&=\int_0^L(2\lambda_1{\bm t}+\lambda_3{\bm n}+{\bm \lambda}_4\times {\bm n}+{\bm \lambda})\cdot {\bm \eta}_1\,ds+\int_0^L\mu {\bm b}\cdot {\bm \eta}'_1\,ds\\
&\quad +\int_0^L(2\lambda_2{\bm n}+\lambda_3{\bm t}-{\bm \lambda}_4\times {\bm t})\cdot {\bm \eta}_2\,ds+\int_0^L({\bm \lambda}_4+\mu {\bm t}')\cdot {\bm \eta}_3\,ds
\end{aligned}
\end{equation}
for any $({\bm \eta}_1|{\bm \eta}_2|{\bm \eta}_3) \in W^{1,p}_0((0,L);\R^{3\times 3})$. Using ${\bm \eta}_1={\bm \eta}_2=0$ and the arbitrariness of ${\bm \eta}_3$ we easily obtain $$
{\bm \lambda}_4=-\mu {\bm t}'+ f_b{\bm n}'\,.
$$
Now, using ${\bm \eta}_1={\bm \eta}_3=0$ and ${\bm \eta}_2=\varphi{\bm b}$ with $\varphi\in C_c^1(0,L)$ we deduce that 
\[
-f_b'={\bm \lambda}_4 \cdot {\bm n}=-\mu\,{\bm t}'\cdot {\bm n}
\]
which is \eqref{elw}$_1$. Next, taking ${\bm \eta}_1={\bm \eta}_3=0$ and ${\bm \eta}_2=\varphi{\bm t}$ we easily get 
$$
\lambda_3=0.
$$
Finally, considering ${\bm \eta}_2={\bm \eta}_3=0$ and ${\bm \eta}_1=\varphi{\bm n}$ or ${\bm \eta}_1=\varphi{\bm b}$ we arrive at 
\[
-f_a'={\bm \lambda}\cdot {\bm n}+\mu\,{\bm n'}\cdot {\bm b},
\]
which is  \eqref{elw}$_2$ and
\[
({\bm t}'\cdot {\bm n})f_b-({\bm n}'\cdot {\bm b})f_a=-\mu'+{\bm\lambda} \cdot {\bm b}.
\]
which is exactly \eqref{elw}$_3$, and the proof is complete.
\end{proof}

\begin{remark}{\rm
We point out that from \eqref{fond} we can deduce other conditions that permit us to find $\lambda_1$ and $\lambda_2$, useless for our purposes. For the sake of completeness we give the complete system of conditions obtained from \eqref{fond}:
\[
\left\{\begin{array}{lr}
({\bm t}'\cdot {\bm n})f_a=2\lambda_1-{\bm \lambda}_4\cdot {\bm b}+{\bm \lambda}\cdot {\bm t}, & \text{choosing ${\bm \eta}_1=\varphi{\bm t},{\bm \eta}_2={\bm \eta}_3=0$},\\
-f_a'=\lambda_3+{\bm \lambda}\cdot {\bm n}+\mu{\bm n'}\cdot {\bm b}, & \text{choosing ${\bm \eta}_1=\varphi{\bm n},{\bm \eta}_2={\bm \eta}_3=0$},\\
\mu'={\bm \lambda}_4\cdot {\bm t}+{\bm \lambda}\cdot {\bm b}+({\bm n}'\cdot {\bm b})f_a, & \text{choosing ${\bm \eta}_1=\varphi{\bm b},{\bm \eta}_2={\bm \eta}_3=0$},\\
\lambda_3=0, & \text{choosing ${\bm \eta}_2=\varphi{\bm t},{\bm \eta}_1={\bm \eta}_3=0$},\\
({\bm t}'\cdot{\bm n})f_a+({\bm n}'\cdot{\bm b})f_b=2\lambda_2-{\bm \lambda}_4 \cdot {\bm t}, & \text{choosing ${\bm \eta}_2=\varphi{\bm n},{\bm \eta}_1={\bm \eta}_3=0$},\\
-f_b'={\bm\lambda}_4\cdot {\bm n}, & \text{choosing ${\bm \eta}_2=\varphi{\bm b},{\bm \eta}_1={\bm \eta}_3=0$},\\
 f_b{\bm n}'={\bm \lambda}_4+\mu {\bm t}', & \text{choosing ${\bm \eta}_3$ arbitrarily and ${\bm \eta}_1={\bm \eta}_2=0$}.
\end{array}\right.
\]
}
\end{remark}

We conclude this general section with the complete elimination of the Lagrange multipliers in the system \eqref{elw}, but we have to assume a priori regularity. 

\begin{theorem}\label{main3}
Assume that $f$ is of class $C^3$ and let $({\bm t}|{\bm n}|{\bm b}) \in W$ be a solution of the system \eqref{elw} with ${\bm t},{\bm n},{\bm b}$ of class $C^4$. Then at any point where $\k\ne 0$ we have 
\begin{equation}\label{reg1}
\left\{\begin{array}{ll}
\displaystyle 2(\tau f_a)'-\tau' f_a-\left(\frac{f_b'}{\k}\right)''+\frac{\tau^2}{\k}f'_b-(\k f_b)'=0\\
\\
\displaystyle -\k f_a'-\tau f_b'=\left(\frac{f_a''}{\k}-\frac{\tau^2}{\k}f_a+\frac{2\tau}{\k}\left(\frac{f_b'}{\k}\right)'+\tau'\frac{f_b'}{\k^2}+\tau f_b\right)'
\end{array}\right.
\end{equation}
where $f_a=f_a(s,\k,\tau)$ and  $f_b=f_b(s,\k,\tau)$.
\end{theorem}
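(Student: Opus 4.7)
The plan is to use the three equations \eqref{elw} to express the multiplier $\mu$ and the projections of the constant vector $\bm\lambda$ onto the moving frame $\{\bm t,\bm n,\bm b\}$ as explicit functions of $\k,\tau,f_a,f_b$ and their derivatives, and then exploit the fact that $\bm\lambda$ is constant in $\R^3$ (hence $\bm\lambda'=0$) to produce the two scalar identities \eqref{reg1}. The regularity assumptions on $f$ and on $({\bm t}|{\bm n}|{\bm b})$ make every quantity appearing below continuously differentiable as many times as needed, so no distributional arguments are required.

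First, on the open set where $\k\ne 0$ I would solve \eqref{elw}$_1$ for $\mu$, obtaining $\mu=f_b'/\k$. Substituting this into \eqref{elw}$_2$ and \eqref{elw}$_3$ gives
\[
\bm\lambda\cdot\bm n=-f_a'-\frac{\tau f_b'}{\k},\qquad \bm\lambda\cdot\bm b=\k f_b-\tau f_a+\left(\frac{f_b'}{\k}\right)'.
\]
Write $\gamma:=\bm\lambda\cdot\bm t$, $\alpha:=\bm\lambda\cdot\bm n$, $\beta:=\bm\lambda\cdot\bm b$. Because $\bm\lambda$ is constant and the frame satisfies \eqref{ode}, differentiation yields the linear system
\[
\gamma'=\k\alpha,\qquad \alpha'=-\k\gamma+\tau\beta,\qquad \beta'=-\tau\alpha.
\]
The two identities in \eqref{reg1} will come from $\beta'=-\tau\alpha$ and $\gamma'=\k\alpha$ respectively.

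For \eqref{reg1}$_1$ I would plug the expressions for $\alpha$ and $\beta$ into $\beta'=-\tau\alpha$ and expand $(\tau f_a)'=\tau' f_a+\tau f_a'$; the $\tau f_a'$ terms combine to produce the $2(\tau f_a)'$ summand, and after collecting everything on one side we arrive exactly at the first equation in \eqref{reg1}. For \eqref{reg1}$_2$ I would first use $\alpha'=-\k\gamma+\tau\beta$ to solve for $\gamma$, namely
\[
\gamma=\frac{\tau\beta-\alpha'}{\k}=\tau f_b-\frac{\tau^2 f_a}{\k}+\frac{2\tau}{\k}\left(\frac{f_b'}{\k}\right)'+\frac{f_a''}{\k}+\frac{\tau' f_b'}{\k^2},
\]
where the coefficient of $(f_b'/\k)'$ becomes $2\tau/\k$ after one $\tau(f_b'/\k)'$ coming from $\tau\beta$ adds to another produced by $-\alpha'=f_a''+(\tau f_b'/\k)'$. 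Then imposing $\gamma'=\k\alpha=-\k f_a'-\tau f_b'$ delivers precisely \eqref{reg1}$_2$.

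The only real obstacle is bookkeeping: the expression for $\gamma$ combines several terms each of which is itself a derivative of a quotient by $\k$, and one must be careful that the $\tau(f_b'/\k)'$ contributions add (not cancel) and that the $\tau' f_b'/\k^2$ term survives. Everything else is mechanical differentiation under the $C^3$/$C^4$ regularity hypothesis, and the hypothesis $\k\ne 0$ ensures all divisions are legitimate.
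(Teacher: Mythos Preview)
Your proposal is correct and follows essentially the same route as the paper: both proofs solve \eqref{elw}$_1$ for $\mu=f_b'/\k$, then obtain \eqref{reg1}$_1$ by differentiating \eqref{elw}$_3$ and inserting \eqref{elw}$_2$ (your $\beta'=-\tau\alpha$), and obtain \eqref{reg1}$_2$ by differentiating \eqref{elw}$_2$, inserting \eqref{elw}$_3$ to isolate $\bm\lambda\cdot\bm t$, and then differentiating once more using $\bm t'=\k\bm n$ (your $\alpha'=-\k\gamma+\tau\beta$ followed by $\gamma'=\k\alpha$). The only difference is that you package the frame components of $\bm\lambda$ into $(\gamma,\alpha,\beta)$ and write out their induced ODE system, whereas the paper performs the same substitutions inline.
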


\begin{proof}
Let us take a point where $\k\ne 0$. From \eqref{elw}$_1$ we get 
\[
\mu=\frac{f_b'}{\k}.
\]
Now, differentiating \eqref{elw}$_3$, using the fact that ${\bm b}'=-\tau {\bm n}$ and inserting \eqref{elw}$_2$ we easily get
\[
\left(\frac{f_b'}{\k}\right)''=-\k'f_b-\k f_b'+\tau'f_a+\frac{\tau^2}{\k}f'_b+2\tau f_a'
\]
which gives \eqref{reg1}$_1$. Next, differentiating \eqref{elw}$_2$, using the fact that ${\bm n}'=-\k{\bm t}+\tau {\bm b}$ and inserting \eqref{elw}$_3$ we obtain 
\[
{\lambda}\cdot {\bm t}=\frac{f_a''}{\k}-\frac{\tau^2}{\k}f_a+\frac{2\tau}{\k}\left(\frac{f_b'}{\k}\right)'+\tau'\frac{f_b'}{\k^2}+\tau f_b
\]
and then, since ${\bm t}'=\k{\bm n}$, 
\[
-\k f_a'-\tau f_b'=\left(\frac{f_a''}{\k}-\frac{\tau^2}{\k}f_a+\frac{2\tau}{\k}\left(\frac{f_b'}{\k}\right)'+\tau'\frac{f_b'}{\k^2}+\tau f_b\right)'
\]
which is \eqref{reg1}$_2$.
\end{proof}

\section{Some examples}
\label{sec:examples}

In this section we discuss some explicit examples arising from the applications.

\subsection{The Euler elastica}

As recalled in the introduction, the study of the Euler elastica functional 
\[
\int_0^L \k^2\,ds
\]
has been very vast. We refer here to the papers by Langer and Singer \cite{LS83,LS,LS_85}. First of all, they proved that the unique global (and local) minimizer among all $W^{2,2}$ curves $C^1$-periodics is the circumference with length $L$. Concerning critical points, there is essentially another planar and $C^1$-periodic critical point, which is known in literature as {\it  lemniscate} (an eight-figure). A great variety of space critical points in the same class of admissible curves can be obtained and all of them lie on an embedded torus of revolution. Finally, many other critical points can be found if we do not assume the closedness of the curve ({\it free elasticae}). As proved by Langer and Singer \cite{LS}, the general equations of elasticae are given by  
\[
\left\{\begin{array}{ll}
2\k''-2\k\tau^2+\k^3-c_1\k=0\\
\k^2\tau=c_2
\end{array}\right.
\]
where $c_1,c_2$ are constants (free elasticae are obtained for $c_1=0$). 

Following our notation, we get the Euler functional choosing $f(s,a,b)=a^2$. In this case condition \eqref{f2} is not satisfied so we are not able to apply our Theorem \ref{main1} in order to get minimizers, at least for space curves. Nevertheless, conditions \eqref{f3} and \eqref{f4} are satisfied, so that, we are able to apply Theorem \ref{main2}, since we know that at least a minimizer exists. 

\begin{theorem}
Let $\mathcal E \colon W \to \R \cup \{+\infty\}$ be given by 
\[
\mathcal E({\bm t}|{\bm n}|{\bm b})=\int_0^L \k^2\,ds.
\]
Let $({\bm t}|{\bm n}|{\bm b}) \in W$ be a minimizer of $\mathcal E$. Let $S=\{s\in [0,L] : \k(s)=0\}$. Then $S$ is negligible. Moreover, $\k\in W^{2,2}(0,L)$, $\tau \in W^{1,2}(0,L)$ and there exist $c_1,c_2 \in \R$ such that 
\begin{equation}\label{elastica}
\left\{\begin{array}{ll}
2\k''-2\k\tau^2+\k^3-c_1\k=0\\
\k^2\tau=c_2
\end{array}\right.
\end{equation}
everywhere on $[0,L]$.
\end{theorem}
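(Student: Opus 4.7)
To prove the theorem I would apply Theorem \ref{main2} to $f(s,a,b)=a^2$, for which $p=2$, $f_a=2a$, $f_b=0$, and the growth hypotheses \eqref{f3}--\eqref{f4} are clearly satisfied. The Lagrange system \eqref{elw} then becomes, with $\mu\in L^2(0,L)$, $\mu'\in L^2(0,L)$ and $\bm{\lambda}\in\R^3$,
\begin{equation*}
\mu\k=0,\qquad -2\k'=\mu\tau+\bm{\lambda}\cdot\bm{n},\qquad -2\tau\k=-\mu'+\bm{\lambda}\cdot\bm{b}\qquad\text{a.e.\,on }(0,L).
\end{equation*}
Since $\mu\in W^{1,2}(0,L)\hookrightarrow C^0([0,L])$ and the second equation shows $\k'\in L^1(0,L)$, also $\k$ is continuous, so $S$ is closed.

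The first task is to show $\mu\equiv 0$. By continuity, $\mu\k=0$ holds pointwise on $[0,L]$, hence $\mu=0$ on the open set $[0,L]\setminus S$. On each open component $(a,b)$ of $\mathrm{int}(S)$ we have $\k\equiv 0$, so \eqref{ode} gives $\bm{t}'=0$ while the remaining EL equations reduce to $\bm{\lambda}\cdot\bm{n}=-\mu\tau$ and $\mu'=\bm{\lambda}\cdot\bm{b}$. Differentiating the latter and using $\bm{b}'=-\tau\bm{n}$ produces the ODE $\mu''=\mu\tau^2$ with $\mu(a)=\mu(b)=0$, and an integration by parts of this identity tested against $\mu$ yields $\int_a^b((\mu')^2+\mu^2\tau^2)\,ds=0$, forcing $\mu\equiv 0$ on $(a,b)$. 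If $\mathrm{int}(S)=\emptyset$ instead, the set $[0,L]\setminus S$ is dense in $[0,L]$ and continuity of $\mu$ alone gives $\mu\equiv 0$.

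With $\mu\equiv 0$ the system reduces to $-2\k'=\bm{\lambda}\cdot\bm{n}$ and $-2\tau\k=\bm{\lambda}\cdot\bm{b}$. Using \eqref{ode}, one finds $(\bm{\lambda}\cdot\bm{t})'=\k(\bm{\lambda}\cdot\bm{n})=-(\k^2)'$, so $\bm{\lambda}\cdot\bm{t}+\k^2=:c_1$ is constant; similarly $C:=\bm{\lambda}\cdot\bm{b}=-2\tau\k$ satisfies $C'=-\tau(\bm{\lambda}\cdot\bm{n})=2\tau\k'$, and a short computation gives $(\k^2\tau)'=-\tfrac12(\k'C+\k C')=0$, so $\k^2\tau=:c_2$ is constant, which is \eqref{elastica}$_2$. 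The negligibility of $S$ follows by cases: if $c_2\ne 0$, the identity $\k^2\tau=c_2$ forces $\k\ne 0$ a.e.; if $c_2=0$ and $\bm{\lambda}=0$, then $\k$ is constant and closedness rules out $\k\equiv 0$; finally, if $c_2=0$ and $\bm{\lambda}\ne 0$, the equality $\bm{\lambda}\cdot\bm{b}\equiv 0$ constrains $\bm{b}$ to the plane $\bm{\lambda}^\perp$ and a planar analysis combined with \eqref{c2}--\eqref{c3} yields $|S|=0$. Equation \eqref{elastica}$_1$ is then obtained by differentiating $-2\k'=\bm{\lambda}\cdot\bm{n}$: since $-2\k''=\bm{\lambda}\cdot\bm{n}'=-\k(\bm{\lambda}\cdot\bm{t})+\tau(\bm{\lambda}\cdot\bm{b})=-\k(c_1-\k^2)-2\tau^2\k$, one immediately gets $2\k''-2\k\tau^2+\k^3-c_1\k=0$.

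The regularity claims are then routine: $\k'=-\tfrac12\bm{\lambda}\cdot\bm{n}\in C^0([0,L])$ and $\k''=-\tfrac12\bm{\lambda}\cdot\bm{n}'\in L^2(0,L)$ give $\k\in W^{2,2}(0,L)$; when $c_2\ne 0$ we have $\tau=c_2/\k^2$ continuous with $\tau'=-2c_2\k'/\k^3\in L^2(0,L)$ since $\k$ is bounded away from zero on the compact interval $[0,L]$, while when $c_2=0$ one has $\tau\equiv 0$ a.e. The main obstacle I anticipate is the degenerate case $c_2=0$, $\bm{\lambda}\ne 0$ in the proof that $|S|=0$, where one must carefully exploit the $C^1$-closedness to exclude a fat Cantor-type zero set of $\k$; the rest is essentially differentiation and algebraic manipulation of the Lagrange system.
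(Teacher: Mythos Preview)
Your strategy differs substantively from the paper's. The paper never proves $\mu\equiv 0$; instead, to obtain $\k\in W^{2,2}(0,L)$ it replaces $\tau$ by an affine interpolant on each interval component of $S$, observes that the modified frame is still a minimizer (the energy is unchanged because $\k$ is untouched), and applies Theorem~\ref{main2} a \emph{second} time so that the new multiplier $\bar\mu$ together with the now $W^{1,2}$ torsion $\bar\tau$ force $\k'\in W^{1,2}$. It then derives \eqref{elastica}$_1$ only on each component of $[0,L]\setminus S$ with a priori component-dependent constants $c_I$, showing at the very end that they all coincide. Your route---kill $\mu$ first, then read off $\k'=-\tfrac12\,\bm\lambda\cdot\bm n\in W^{1,2}$ and the global conservation laws $\bm\lambda\cdot\bm t+\k^2=c_1$, $\k^2\tau=c_2$ directly---is more elementary and sidesteps the modification trick entirely. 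The ODE $\mu''=\mu\tau^2$ on components of $\mathrm{int}(S)$ is a nice device; the only wrinkle is that $\mu(a)=\mu(b)=0$ uses that $a,b$ are accumulation points of $[0,L]\setminus S$, so a component of $\mathrm{int}(S)$ touching an endpoint of $[0,L]$ needs a separate word.

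The genuine gap is the case $c_2=0$, $\bm\lambda\ne 0$, which you correctly flag but do not resolve. The phrase ``a planar analysis combined with \eqref{c2}--\eqref{c3}'' is not the mechanism that actually works, and the fat-Cantor worry is real if one argues only on $[0,L]\setminus S$ (the paper's own Step~5, which merely excludes interval components of $S$, is exposed to the same objection). The fix, however, is available entirely within your framework: once $\mu\equiv 0$ and $c_2=0$, one has $\k\tau^2=0$ a.e.\ (on $[0,L]\setminus S$ because $\tau=0$ there from $\k^2\tau=0$, and on $S$ because $\k=0$), so the term $2\k\tau^2$ drops and $2\k''+\k^3-c_1\k=0$ holds a.e.\ on \emph{all} of $(0,L)$, not only off $S$. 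Bootstrapping gives $\k\in C^2$, and then ODE uniqueness forces $\k'(s_0)\ne 0$ at every $s_0\in S$ (otherwise $\k\equiv 0$, contradicting \eqref{c2}); hence $S$ is a finite set of simple zeros. This also disposes of the endpoint wrinkle above, since an interval in $\mathrm{int}(S)$ would produce a point with $\k=\k'=0$. With this addition your proof is complete and, arguably, cleaner than the paper's.
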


\begin{proof}
Since $f(s,a,b)=a^2$ we get $f_a=2a$ and $f_b=0$. We are in position to apply Theorem \ref{main2}. The system \eqref{elw} reads as 
\begin{equation}\label{el-1}
\left\{\begin{array}{lll}
\mu\k=0\\
{\bm\lambda} \cdot {\bm n}=-2\k'-\mu\tau\\
{\bm\lambda} \cdot {\bm b}=-2\k\tau+\mu'
\end{array}\right.
\end{equation}
for some $\mu \in W^{1,2}(0,L)$ and ${\bm \lambda} \in \R^3$. In particular, \eqref{el-1}$_2$ gives $\k\in W^{1,2}(0,L)$. Now we divide the proof into some steps.
\\
\\
{\it Step 1}. Since \eqref{el-1}$_1$ we get $\mu_{|_{[0,L]\setminus S}}=0$. Hence, from \eqref{el-1}$_3$ we obtain $\tau_{|_{[0,L]\setminus S}} \in W^{1,2}([0,L]\setminus S)$.
\\
\\
{\it Step 3}. We prove now that $\k\in W^{2,2}(0,L)$. Since $\k$ is continuous, $S$ is relatively closed in $[0,L]$. Hence, we can write it as 
\[
S=\bigcup_{h=0}^{+\infty} S_h
\] 
where $S_h$ are pairwise disjoint and $S_h$ is either a singleton or a closed interval with non-empty interior. On each $S_h=[a_h,b_h]$ with $a_h<b_h$ we change $\tau$ as follows: 
\[
\bar \tau (s)=\tau(a_h^-)+\frac{\tau(b_h^+)-\tau(a_h^-)}{b_h-a_h}(s-a_h), \quad \forall s \in [a_h,b_h],
\] 
where $\tau(a_h^-),\tau(b_h^+)$ are the left and right traces of $\tau$ respectively at $a_h$ and $b_h$, with the convention $\tau(0^-)=\tau(L^+)=0$. By construction, we obtain $\bar \tau \in W^{1,2}(0,L)$ and $\bar \tau=\tau$ on $[0,L]\setminus S$. Let $(\bar {\bm t}|\bar {\bm n}|\bar {\bm b}) \in W^{1,2}((0,L);\R^3)$ be the unique solution of the Cauchy problem
\[
\left\{\begin{array}{lc}
\bar {\bm t}'=\k\bar{\bm n}\\
\bar{\bm n}'=-\k\bar{\bm t}+\bar\tau\bar{\bm b}\\
\bar{\bm b}'=-\bar\tau\bar{\bm n}\\
\bar {\bm t}(0)={\bm t}(0)\\
\bar {\bm n}(0)={\bm n}(0)\\
\bar {\bm b}(0)={\bm b}(0).
\end{array}\right.
\]
It is easy to see that $(\bar {\bm t}|\bar {\bm n}|\bar {\bm b}) \in W$ (notice that actually $\bar{\bm t}={\bm t}$ everywhere). Moreover, $\mathcal E(\bar{\bm t}|\bar{\bm n}|\bar{\bm b})=\mathcal E({\bm t}|{\bm n}|{\bm b})$, hence $(\bar {\bm t}|\bar {\bm n}|\bar {\bm b})$ is still a minimizer of $\mathcal E$. Applying Theorem \ref{main2} again, we deduce that 
\begin{equation}\label{el-2}
\left\{\begin{array}{lll}
\bar\mu\k=0\\
\bar{\bm\lambda} \cdot \bar{\bm n}=-2\k'-\bar\mu\bar\tau\\
\bar{\bm\lambda} \cdot \bar{\bm b}=-2\k\bar\tau+\bar\mu'
\end{array}\right.
\end{equation}
for some $\bar \mu \in W^{1,2}(0,L)$ and $\bar{\bm \lambda} \in \R^3$. As a consequence of \eqref{el-2}$_2$ we obtain $\k\in W^{2,2}(0,L)$.
\\
\\
{\it Step 4.} We claim that for any relatively open interval $I \subseteq ([0,L]\setminus S)$ there exists $c_I \in \R$ such that 
\begin{equation}\label{elastica-I}
2\k''-2\k\tau^2+\k^3-c_I\k=0, \quad \text{on $I$.}
\end{equation}
First, on $I$ the system \eqref{el-1} reduces to 
\begin{equation}\label{euler0}
\left\{\begin{array}{ll}
{\bm\lambda} \cdot {\bm n}=-2\k'\\
{\bm\lambda} \cdot {\bm b}=-2\k\tau.
\end{array}\right.
\end{equation}
Differentiating \eqref{euler0}$_1$ we get ${\bm \lambda} \cdot {\bm n}'=-2\k''$. Since ${\bm n}'=-\k{\bm t}+\tau {\bm b}$ we obtain
\[
2\k''=-\k{\bm \lambda} \cdot {\bm t}+\tau {\bm \lambda} \cdot {\bm b} \in W^{1,2}(I),
\] 
from which $\k\in W^{3,2}(I)$. In particular, $\k\in C^2(\overline I)$. Combining ${\bm n}'=-\k{\bm t}+\tau {\bm b}$ with \eqref{euler0}$_2$ we deduce
\begin{equation}\label{eg}
\frac{2\k''}{\k}-2\tau^2={\bm \lambda}\cdot{\bm t}, \quad \text{on $I$.}
\end{equation}
As a consequence we obtain 
\[
\left(\frac{2\k''}{\k}-2\tau^2+\k^2\right)'={\bm \lambda}\cdot{\bm t}'+2\k\k'=\k{\bm \lambda}\cdot{\bm n}+2\k\k'=0
\]
where the last equality follows from \eqref{euler0}$_1$. Then, since $I$ is an interval, there exists $c_I \in \R$ such that 
\[
\frac{2\k''}{\k}-2\tau^2+\k^2=c_I, \quad \text{on $I$},
\]
which proves the claim.
\\
\\
{\it Step 5.} We prove that $S$ is negligible. First of all, it cannot be $\k=0$ everywhere, because of the constraint \eqref{c2}. In order to see that $S$ is negligible it is sufficient to show that in the decomposition of $\{S_h\}_{h\in\N}$ there is no $S_h$ with non-empty interior. Assume by contradiction that there exists $S_h=[a_h,b_h]$ with $a_h<b_h$. Then, either $a_h \in (0,L)$ or $b_h\in (0,L)$. Without loss of generality we can assume $b_h\in (0,L)$ (the argument for $a_h$ is the same). Then $\k \ne 0$ on $(b_h,b_h+\delta)$ for some $\delta>0$, so that $\k\in C^2([b_h,b_h+\delta])$ and using \eqref{elastica-I} we can say that 
\[
2\k''-2\k\tau^2+\k^3-c_h\k=0, \quad \text{on $(b_h,b_h+\delta)$}
\]
for some $c_h\in \R$. As a consequence we deduce that $\k$ is a solution of the Cauchy problem 
\[
\left\{\begin{array}{ll}
2\k''-2\k\tau^2+\k^3-c_h\k=0, \quad \text{on $(b_h,b_h+\delta)$}\\
\k(b_h)=0\\
\k'(b_h)=0
\end{array}
\right.
\]
which means that $\k=0$ on $(b_h,b_h+\delta)$, that is a contradiction.
\\
\\
{\it Step 6.} We can now conclude the proof. Since $S$ is negligible, we immediately deduce that $\tau \in W^{1,2}(0,L)$. It remains to show \eqref{elastica}. Observe that the function 
\[
\frac{2\k''}{\k}-2\tau^2+\k^2 
\]
is piecewise constant and it coincides with ${\bm \lambda}\cdot{\bm t}$ a.e.\,$t\in [0,L]$, As a consequence, 
\[
\frac{2\k''}{\k}-2\tau^2+\k^2 
\]
must be constant, which ends the proof.
\end{proof}

\subsubsection{Numerical results}
In this paragraph we show some numerical results obtained using the software Mathematica (Wolfram Inc., version 12) concerning solutions of the system \eqref{elastica}. We do separate analysis for planar curves and for space curves.
\\
\\
{\it Planar curves}. In this case $c_2=0$. First of all, we pass to the general Cauchy problem for $\k$, namely 
\begin{equation}\label{eq:system_num2d}
\left\{\begin{aligned}
&2 \k^3 \k''+\k^6 -c_1 \k^4=0,\\
&\k(0) = \k_0,\\
& \k'(0) = \k_1.
\end{aligned}
\right.
\end{equation}
The idea is to integrate numerically the system \eqref{eq:system_num2d} and then try to reconstruct the shape of the curve. Without loss of generality we can look for the curve ${\bm r}\colon [0,L] \to \R^2$
\[
{\bm r}(s)=\int_0^s{\bm t}\,dr
\] 
where the tangent vector ${\bm t}=(t^{(1)},t^{(2)})$ solves the system 
\begin{equation}\label{eq:ode_num2d}
\left\{\begin{array}{lc}
(t^{(1)})'=-\k t^{(2)}\\
(t^{(2)})'=\k t^{(1)}\\
t^{(1)}(0)=1\\
t^{(2)}(0)=0.
\end{array}\right.
\end{equation}
In other words, we require the curve to be clamped at the origin which ``starts'' with the canonical orthonormal frame. However, we point out that in general it is not necessarily true that ${\bm r}$ is admissible: for instance we do not have implemented any closedness of ${\bm r}$. From Differential Geometry, it is known that the necessary and sufficient conditions for a planar curve to be closed are the equalities
\[
\int_0^L\cos\left(\int_0^t \k(s)\,ds\right)\,dt=\int_0^L\sin\left(\int_0^t \k(s)\,ds\right)\,dt=0.
\]
We also anticipate the fact that for space curves there are no similar conditions on $\k,\tau$ in order to guarantee that the curve is closed. For details we refer to \cite{N}. Actually, for numerical reasons we decide to introduce a stop condition in the numeric integration of \eqref{eq:system_num2d}--\eqref{eq:ode_num2d}: we vary randomly the constants $c_1, \k_0,\k_1$ until the inequality 
\begin{equation}\label{eq:stopcond_piano}
	d = |\bm{r}(L)| +  |\bm{t}(L) - (1,0)| < 10^{-6}
\end{equation}
is satisfied. Condition \eqref{eq:stopcond_piano} formally implies that the obtained curve ${\bm r}$ is {\it almost closed} as well as its tangent vector.
\begin{figure}[h!]
	\begin{subfigure}{.45\linewidth}
		\centering
		\includegraphics[width=0.9\textwidth]{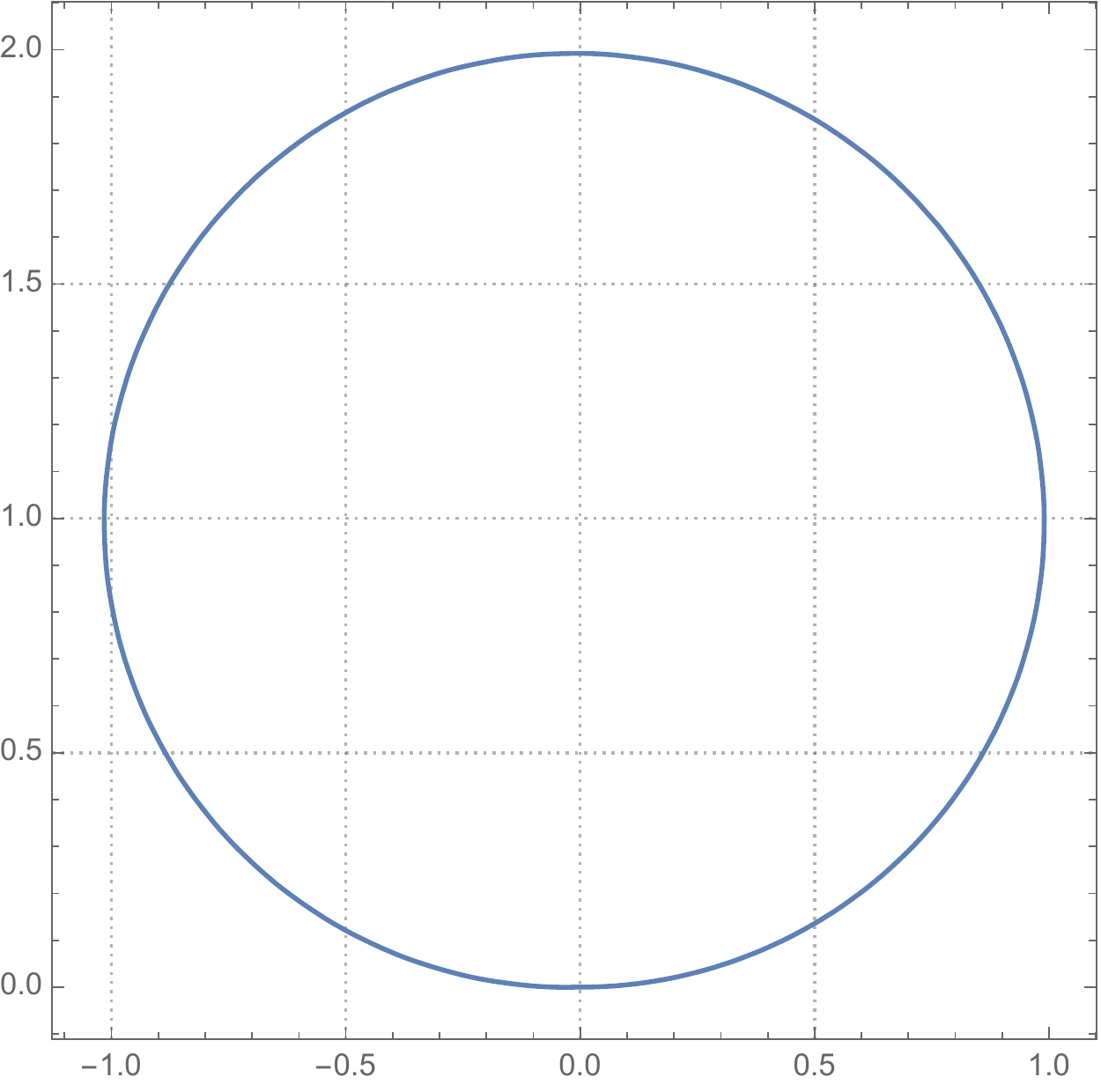}
		\caption{Circumference}
		\label{fig:fig3}
	\end{subfigure}
	\begin{subfigure}{.45\linewidth}
		\centering
		\includegraphics[width=\textwidth]{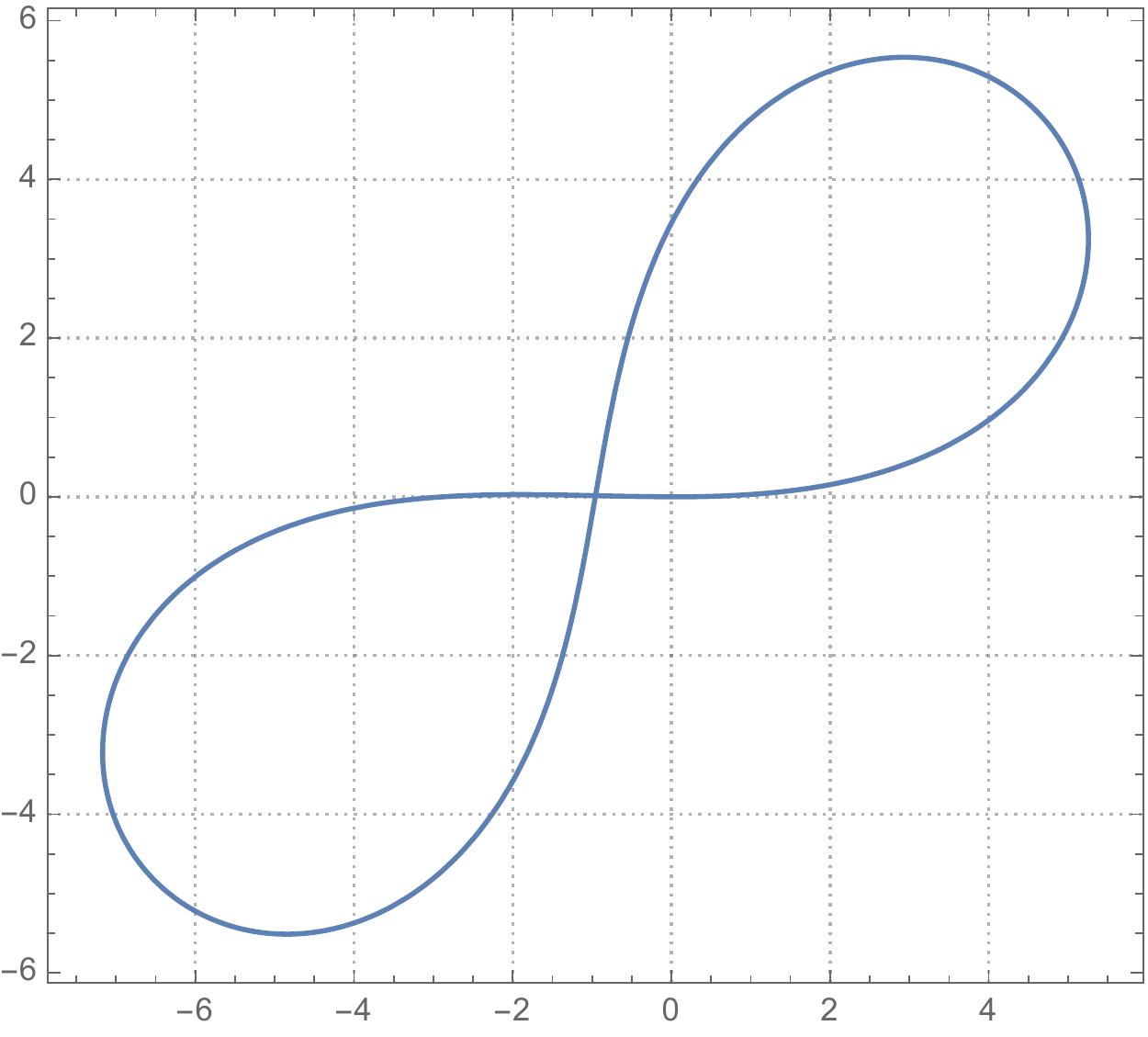}
		\caption{Lemniscate}
		\label{fig:fig7}
	\end{subfigure}
	\caption{Closed elasticae: the circumference (A) (the only one stable), and the lemniscate (B) (unstable, see \cite[Ex.\,3]{LS_85}).}
	\label{fig:k_closed}
\end{figure}

We obtain the circumference in Fig.\,\ref{fig:fig3} and the lemniscate in Fig.\,\ref{fig:fig7} with the values reported in Table \ref{tab:val_k_piano_closed}. 
\begin{table}[t!]
	\centering 
	\color{black}
	\begin{tabular}{|c|c|c|c|c|}
		\hline
		& $c_1$ &$\k_0$ &$\k_1$ &$L $ \\ \hline
	Circumference (Fig.\,\ref{fig:fig3})
		&$1.00824$
		&$1.01227$
		&$0.0003$
		&$2 \pi$
		%\end{minipage}
		\\ \hline
		Lemniscate (Fig.\,\ref{fig:fig7})
		&$0.07911031$
		&$0.0442$
		&$0.046801$
		&$12 \pi$
		\\ \hline
	\end{tabular}
	\caption{Numerical values for the circumference and the lemniscate.}\label{tab:val_k_piano_closed}
\end{table}

Without imposing condition \eqref{eq:stopcond_piano} we can find numerically a lot of free elasticae choosing in an arbitrary way the constants $c_1$, $\k_0$ and $\k_1$. In Table \ref{tab:val_k_open} we report a table for the chosen constants to plot the curves in Fig. \ref{fig:k_open}.
\begin{figure}[h!]
	\begin{subfigure}{.3\linewidth}
		\centering
		\includegraphics[width=\textwidth]{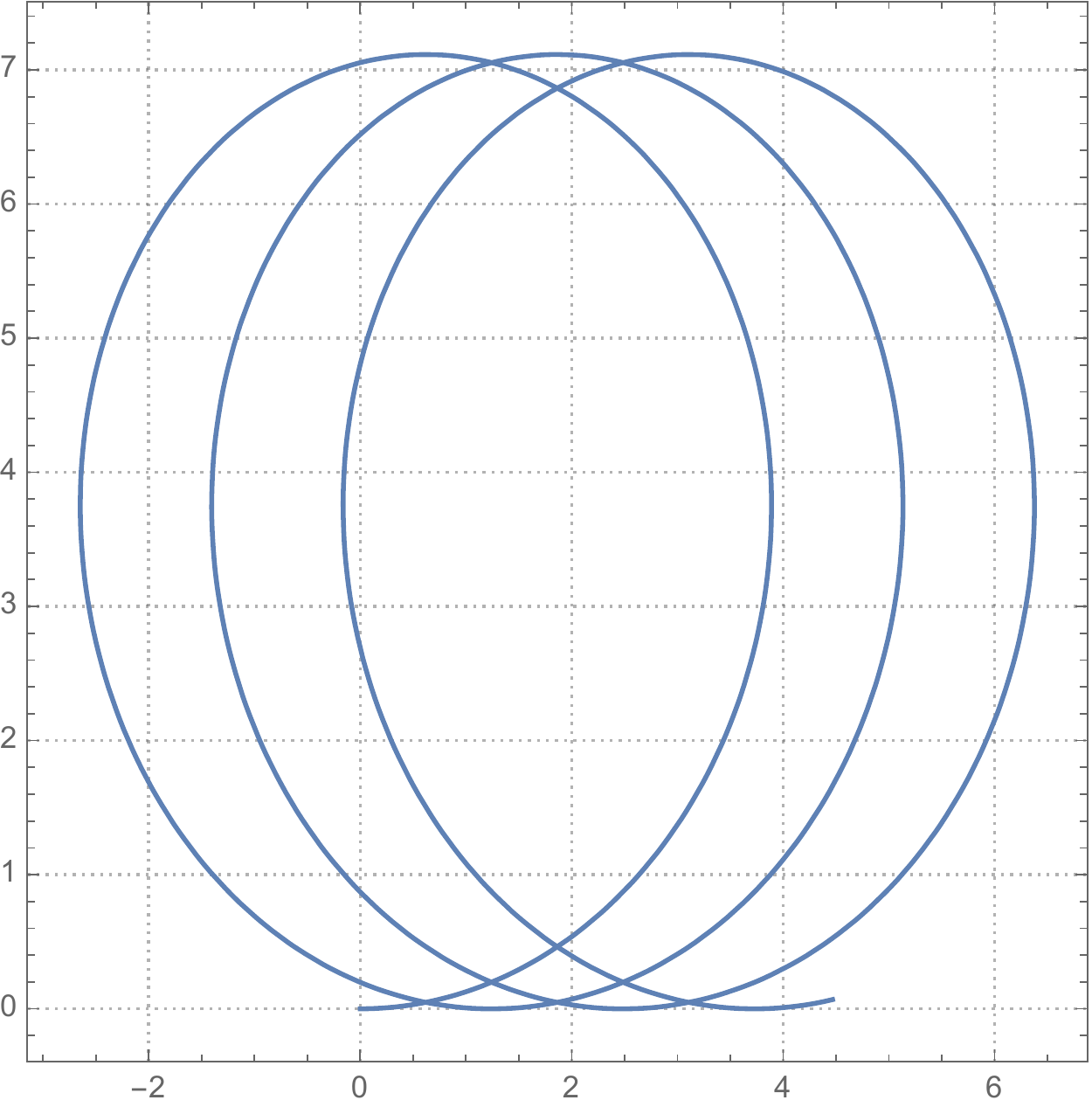}
		\caption{}
		\label{fig:fig6}
	\end{subfigure}
	\begin{subfigure}{.3\linewidth}
		\centering
		\includegraphics[width=\textwidth]{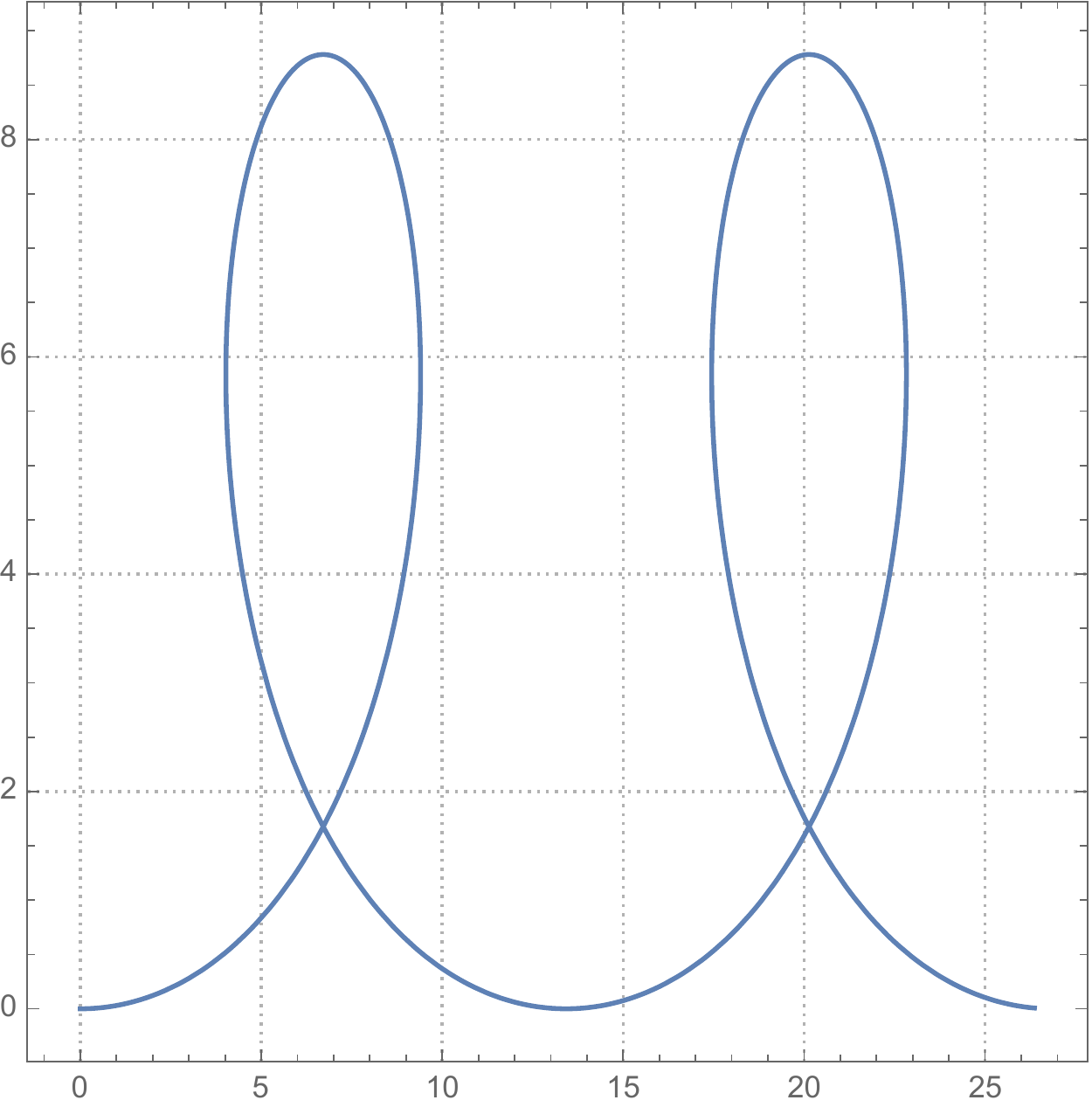}
		\caption{}
		\label{fig:fig5}
	\end{subfigure}
	\begin{subfigure}{.3\linewidth}
	\centering
	\includegraphics[width=\textwidth]{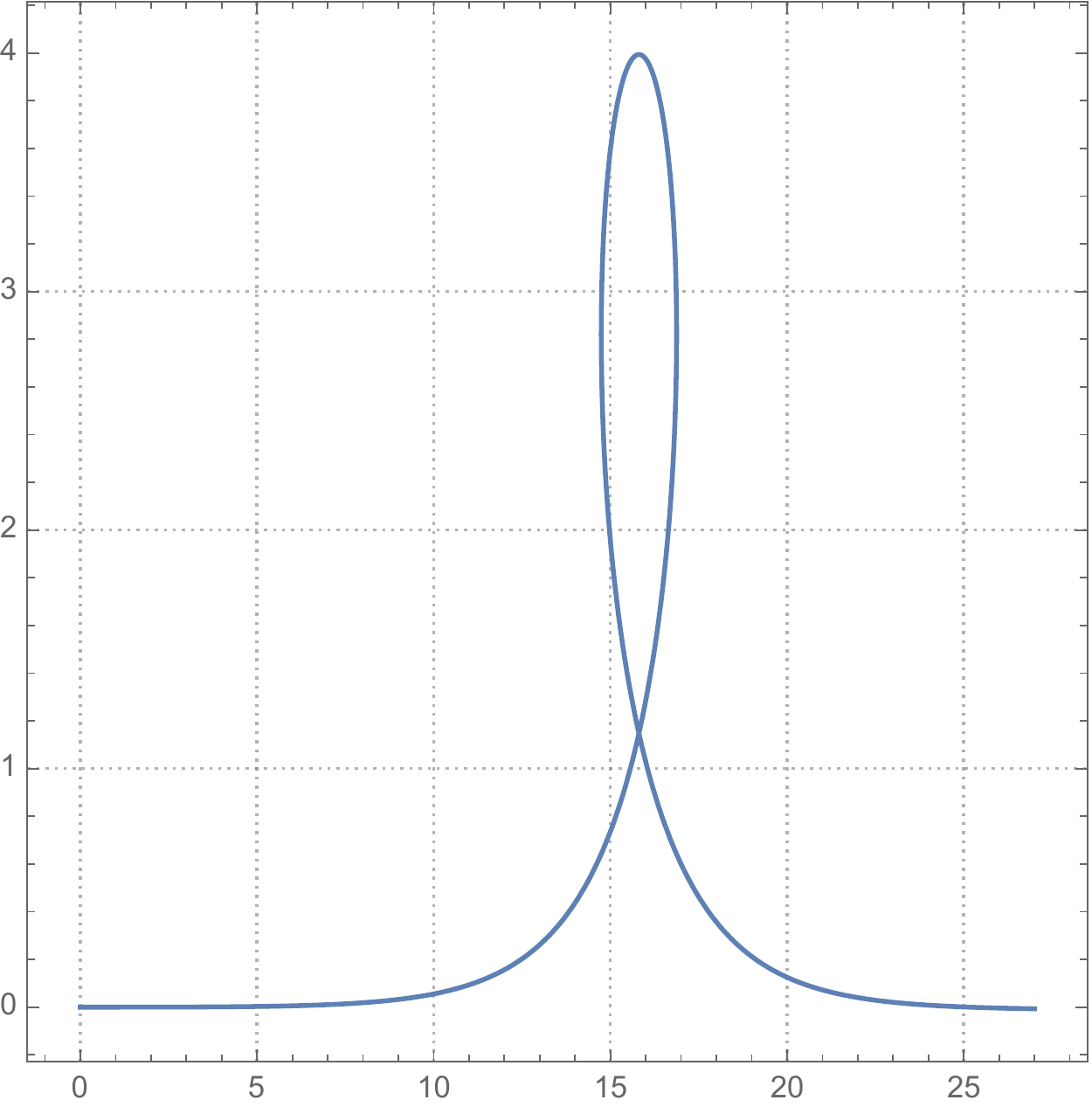}
	\caption{}
	\label{fig:fig1}
\end{subfigure}\\
	\begin{subfigure}{.3\linewidth}
		\centering
		\includegraphics[width=\textwidth]{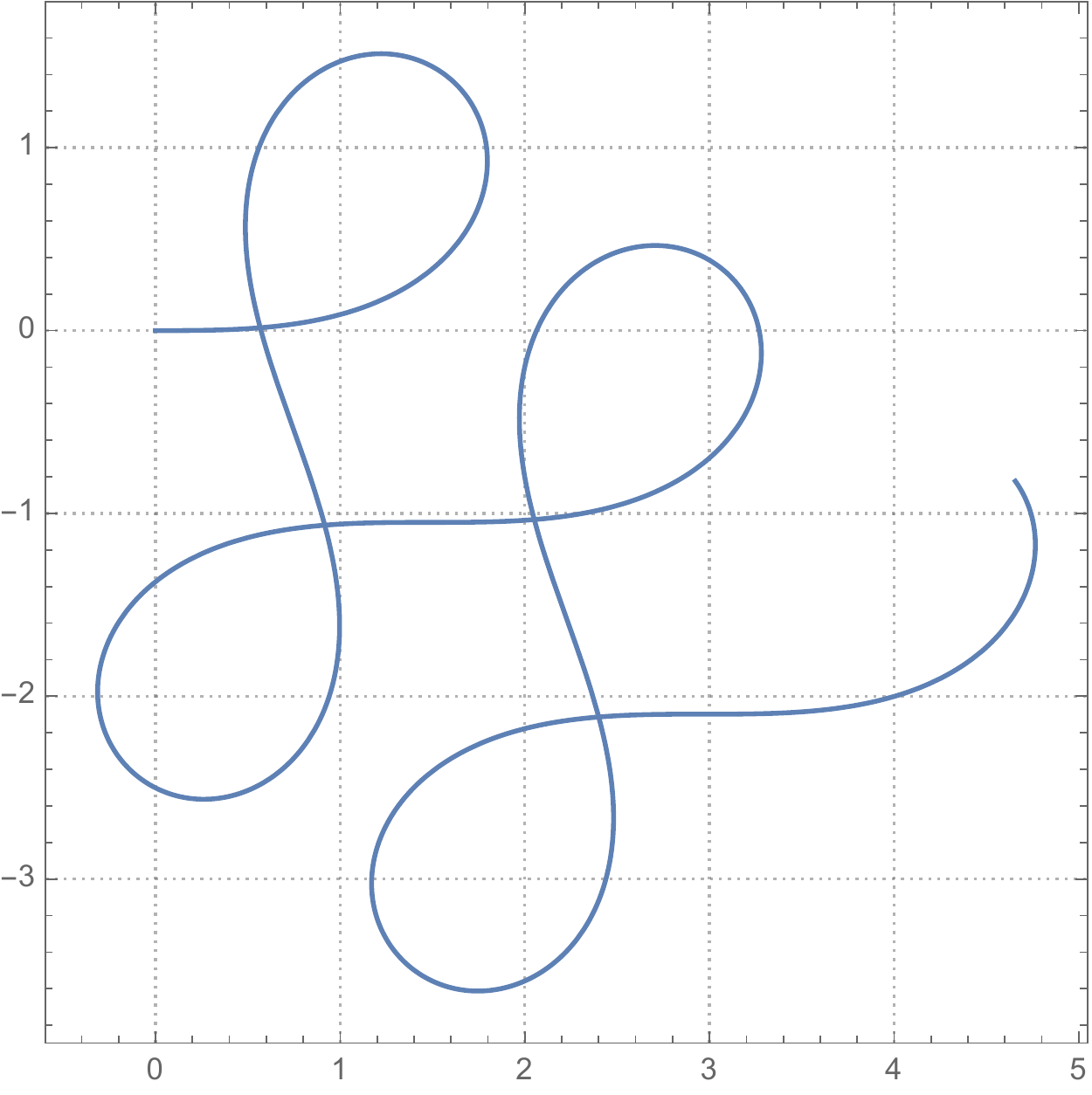}
		\caption{}
		\label{fig:fig2}
	\end{subfigure}
	\begin{subfigure}{.3\linewidth}
		\centering
		\includegraphics[width=\textwidth]{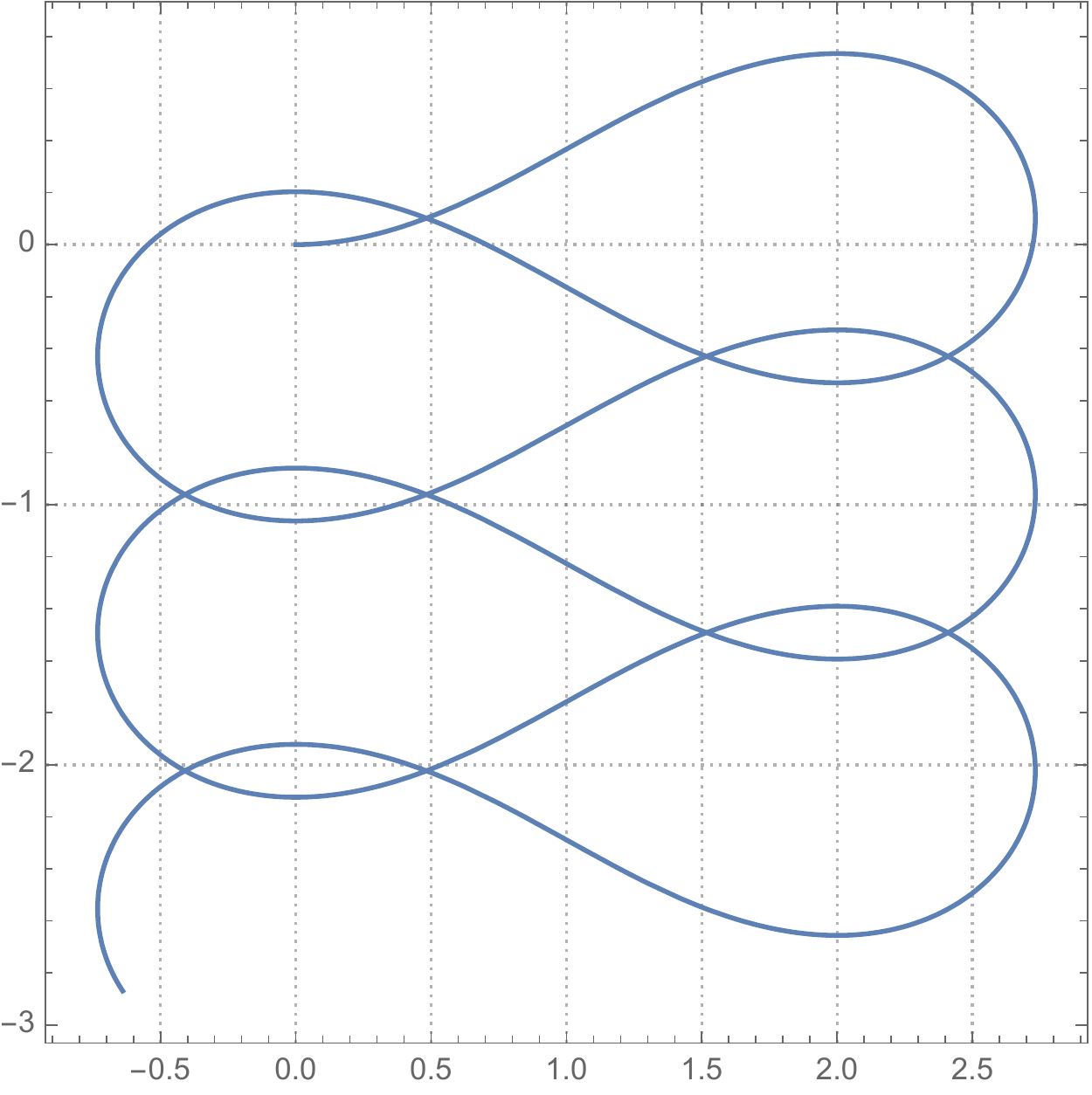}
		\caption{}
		\label{fig:fig4}
	\end{subfigure}
	\begin{subfigure}{.3\linewidth}
	\centering
	\includegraphics[width=\textwidth]{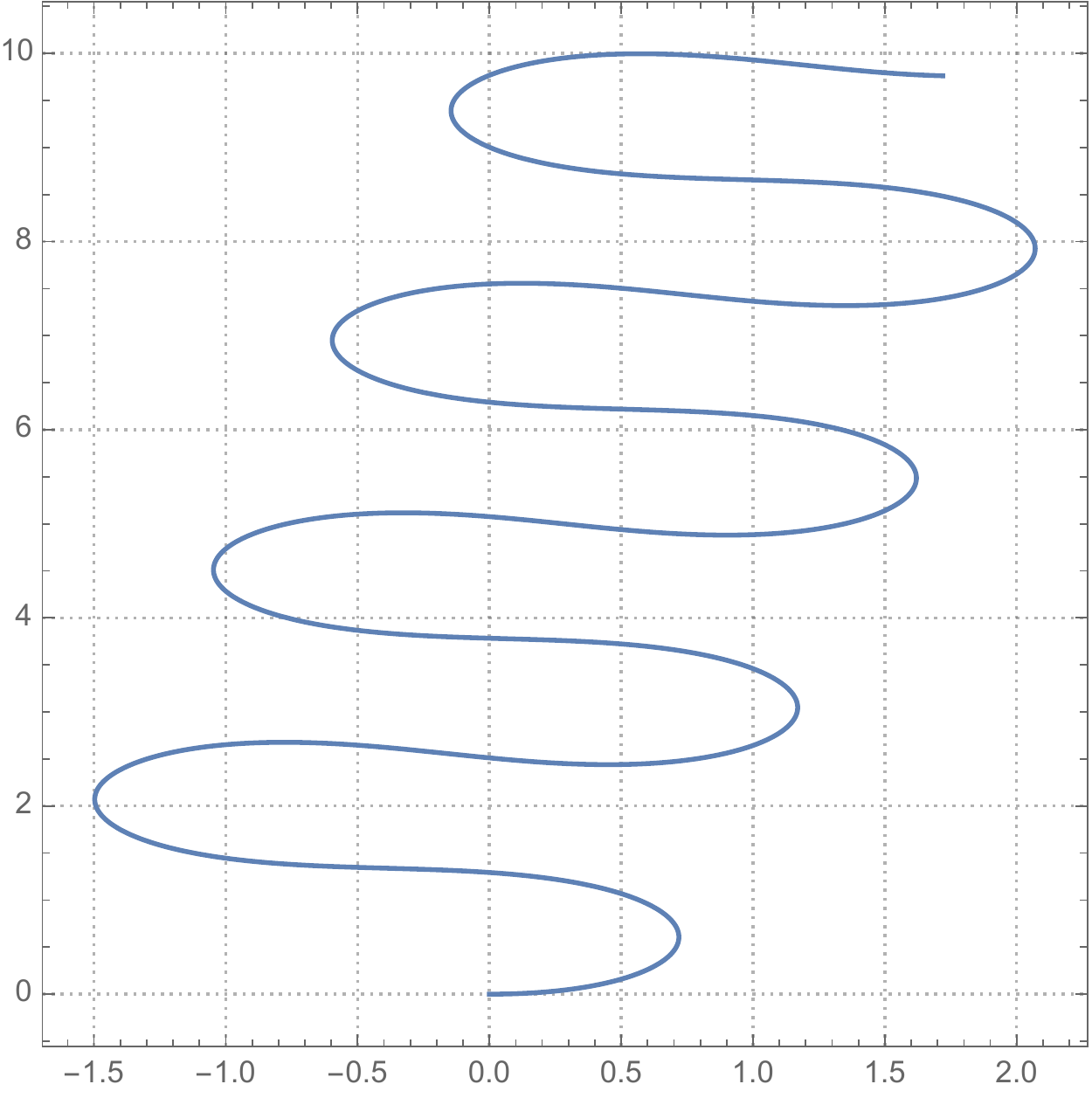}
	\caption{}
	\label{fig:fig8}
\end{subfigure}
	\caption{Free elasticae: all the possibile (planar) shapes \cite{LS_85}.}
	\label{fig:k_open}
\end{figure}

\begin{table}[t!]
	\centering 
	\begin{tabular}{|c|c|c|c|c|}
		\hline
		& $c_1$ &$\k_0$ &$\k_1$ &$L $  \\ \hline
		
		Fig. \ref{fig:fig6}
		&$0.08$
		&$0.25$
		&$0$
		&$22 \pi$
		%\end{minipage}
		\\ \hline
		Fig. \ref{fig:fig5}
		&$0.08$
		&$0.06$
		&$0$
		&$21 \pi$
		\\ \hline
		Fig. \ref{fig:fig1}
		&$0.5$
		&$0$
		&$0,001$
		&$8 \pi$
		\\ \hline
		Fig. \ref{fig:fig2}
		&$\sqrt{2}$
		&$0$
		&$0.5$
		&$8 \pi$
		\\ \hline
		Fig. \ref{fig:fig4}
		&$1$
		&$1$
		&$-1$
		&$8 \pi$
		\\ \hline
		Fig. \ref{fig:fig8}
		&$0.3$
		&$0.91$
		&$1.43$
		&$8 \pi$
		\\ \hline
	\end{tabular}
	\caption{Numerical values for free elasticae in Fig.\,\ref{fig:k_open}.}\label{tab:val_k_open}
\end{table}
\noindent
{\it Space curves.} In this case we do not have $c_2=0$, so we have to deal with the complete system \eqref{elastica}. Again, we can look for the curve ${\bm r}\colon [0,L] \to \R^3$ given by 
\[
{\bm r}(s)=\int_0^s {\bm t}\,dr
\]
where now the tangent vector ${\bm t}$ is the solution of 
\[
\left\{\begin{array}{lc}
{\bm t}'=\k{\bm n},\\
{\bm n}'=-\k{\bm t}+\tau{\bm b},\\
{\bm b}'=-\tau{\bm n},\\
{\bm t}(0) = (1,0,0),\\
{\bm n}(0) = (0,1,0),\\
{\bm b}(0) = (0,0,1),
\end{array}\right.
\]
To find closed spatial curves, we use the stop condition 
\begin{equation}\label{eq:stopcond_spazio}
d = |\bm{r}(L)| +  |\bm{t}(L) - (1,0,0)| < 10^{-6},
\end{equation}
similar to the one introduced in the plane. The results for closed elasticae are reported in Fig. \ref{fig:k_tau} and in the corresponding Table \ref{tab:val_k_tau}. Without \eqref{eq:stopcond_spazio} the curves maybe open and we report some examples in Fig.\,\ref{fig:k_tau_open} and Table \ref{tab:val_k_tau_open}.
\begin{figure}[h!]
	\begin{subfigure}{.45\linewidth}
		\centering
		\includegraphics[width=0.85\textwidth]{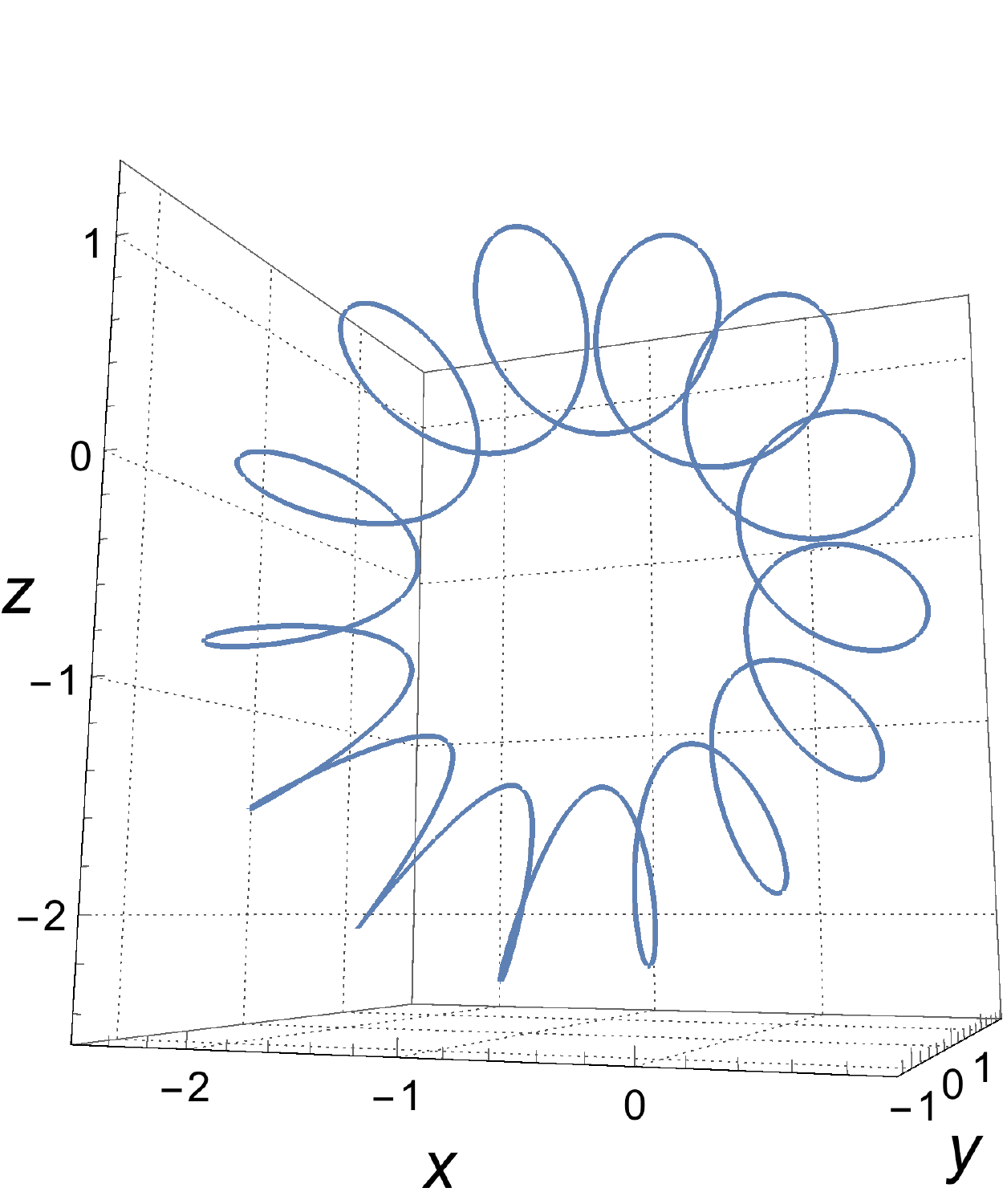}
		\caption{}
		\label{fig:k_tau_fig1}
	\end{subfigure}
	\begin{subfigure}{.48\linewidth}
		\centering
		\includegraphics[width=0.85\textwidth]{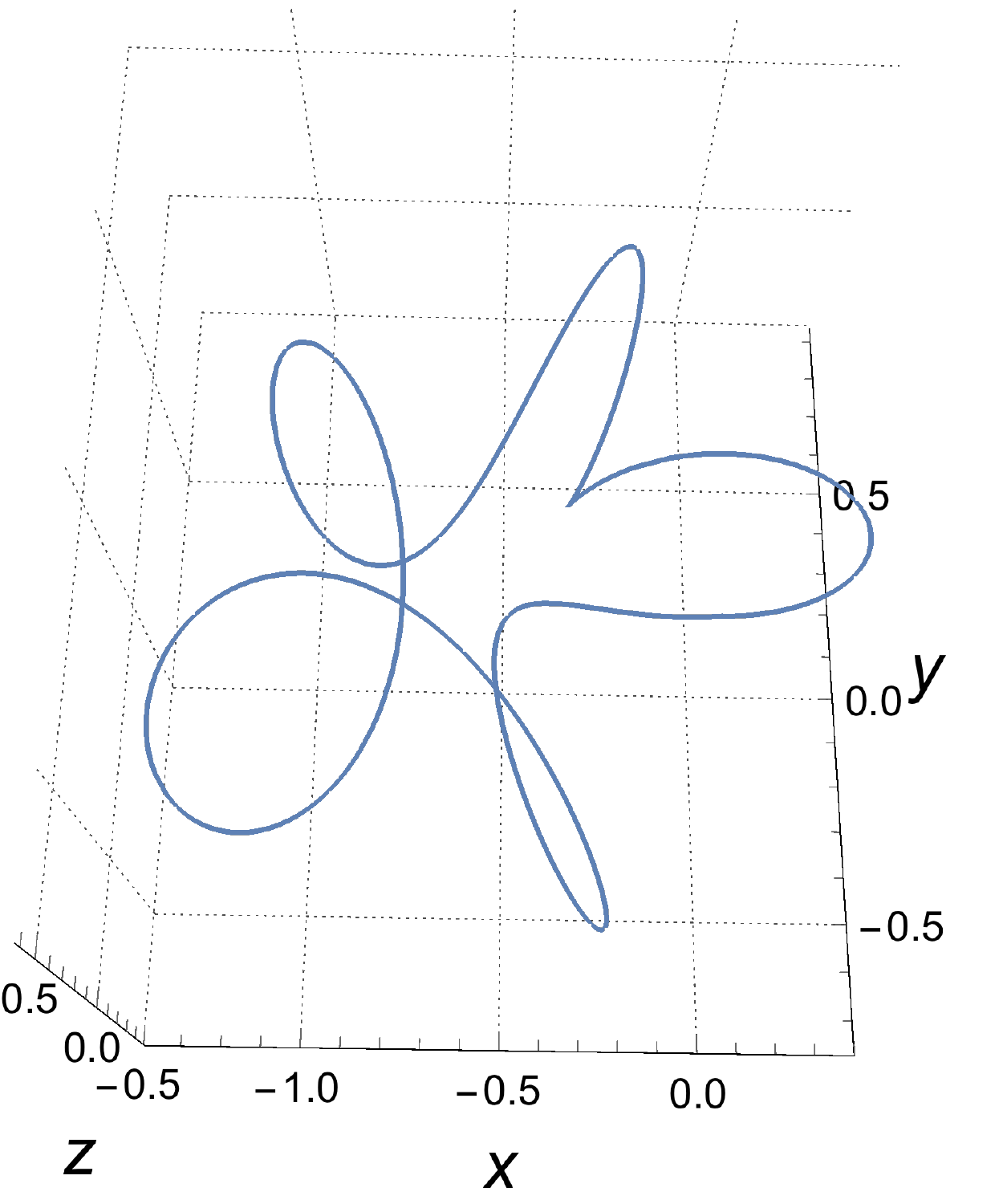}
		\caption{}
		\label{fig:k_tau_fig5}
	\end{subfigure}
\begin{subfigure}{.45\linewidth}
	\centering
	\includegraphics[width=0.85\textwidth]{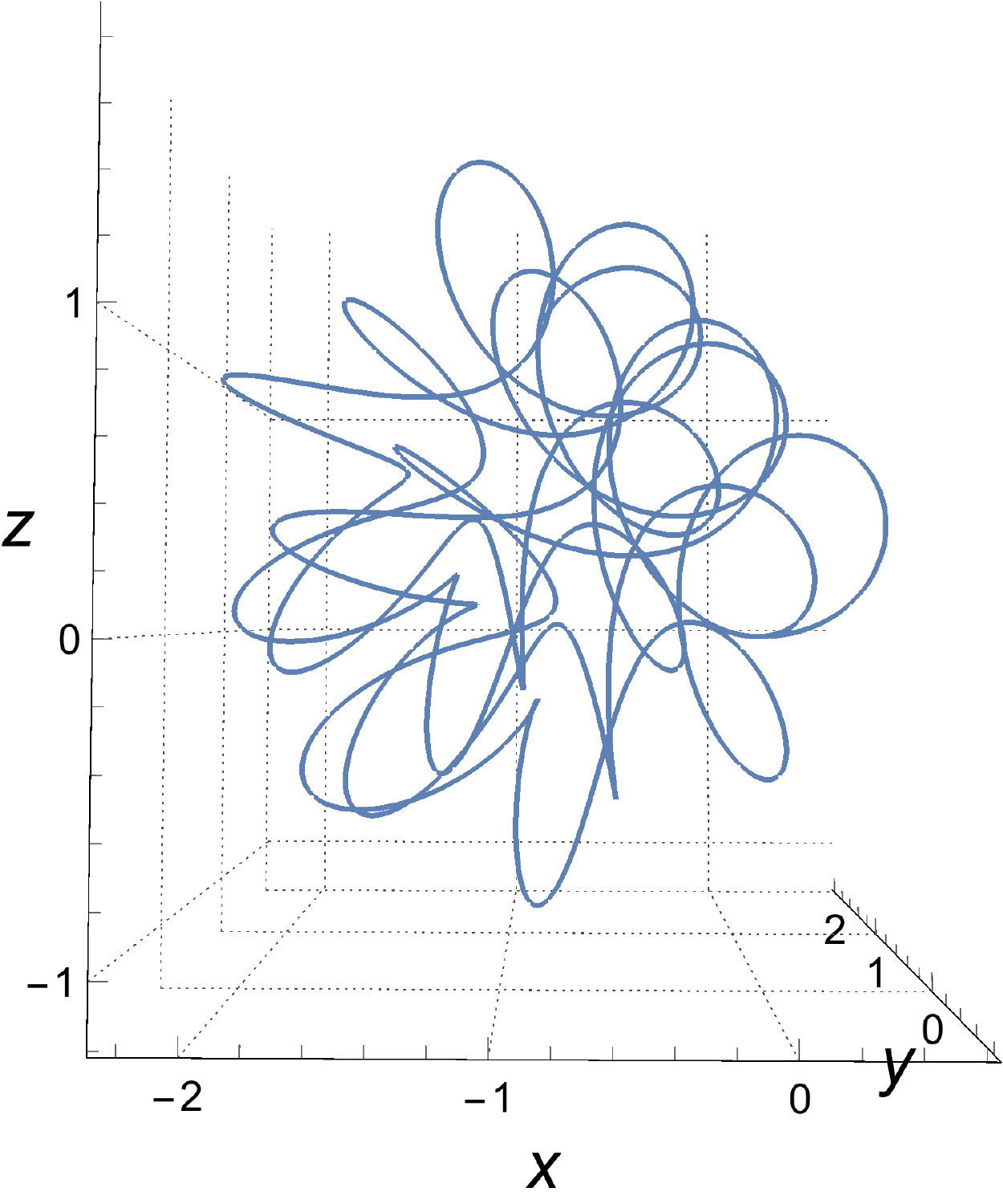}
	\caption{}
	\label{fig:k_tau_fig2}
\end{subfigure}
\begin{subfigure}{.48\linewidth}
	\centering
	\includegraphics[width=0.85\textwidth]{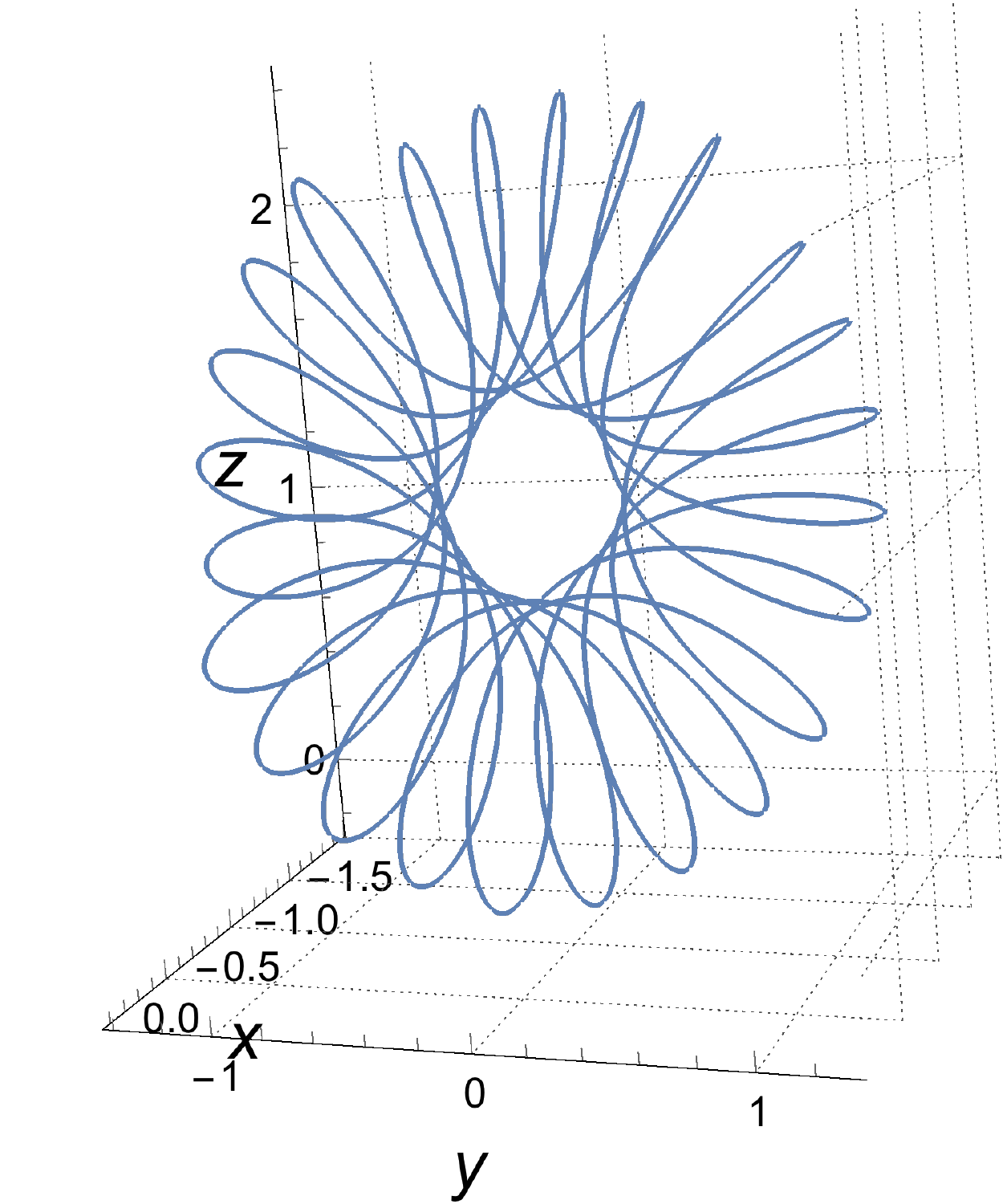}
	\caption{}
	\label{fig:k_tau_fig4}
\end{subfigure}
	\caption{ Examples of closed elasticae in the space.}
	\label{fig:k_tau}
\end{figure}

\begin{table}[t!]
	\centering 
	\begin{tabular}{|c|c|c|c|c|c|}
		\hline
		&$c_1$ & $c_2$ &$\k_0$ &$\k_1$ &$L $  \\ \hline
		
		Fig. \ref{fig:k_tau_fig1}
		&$1.25316$
		&$3.92702$
		&$1.58313$
		&$0.528316$
		&$ 16 \pi$
	
		\\ \hline
		Fig. \ref{fig:k_tau_fig5}
		&$0.08$
		&$5.06$
		&$2.53458$
		&$4.04$
		&$3 \pi$
	
		\\ \hline
		Fig. \ref{fig:k_tau_fig2}
		&$2.06465$
		&$4.38778$
		&$1.51781$
		&$1.47094$
		&$16 \pi$

		\\ \hline
		Fig. \ref{fig:k_tau_fig4}
		&$1.62767$
		&$4.08942$
		&$2.85503$
		&$0.669953$
		&$30 \pi$

		\\ \hline
	\end{tabular}
	\caption{Numerical values chosen to plot the elasticae in Fig.\,\ref{fig:k_tau}.}\label{tab:val_k_tau}
\end{table}

\begin{figure}[h!]
	\begin{subfigure}{.48\linewidth}
		\centering
		\includegraphics[width=0.75\textwidth]{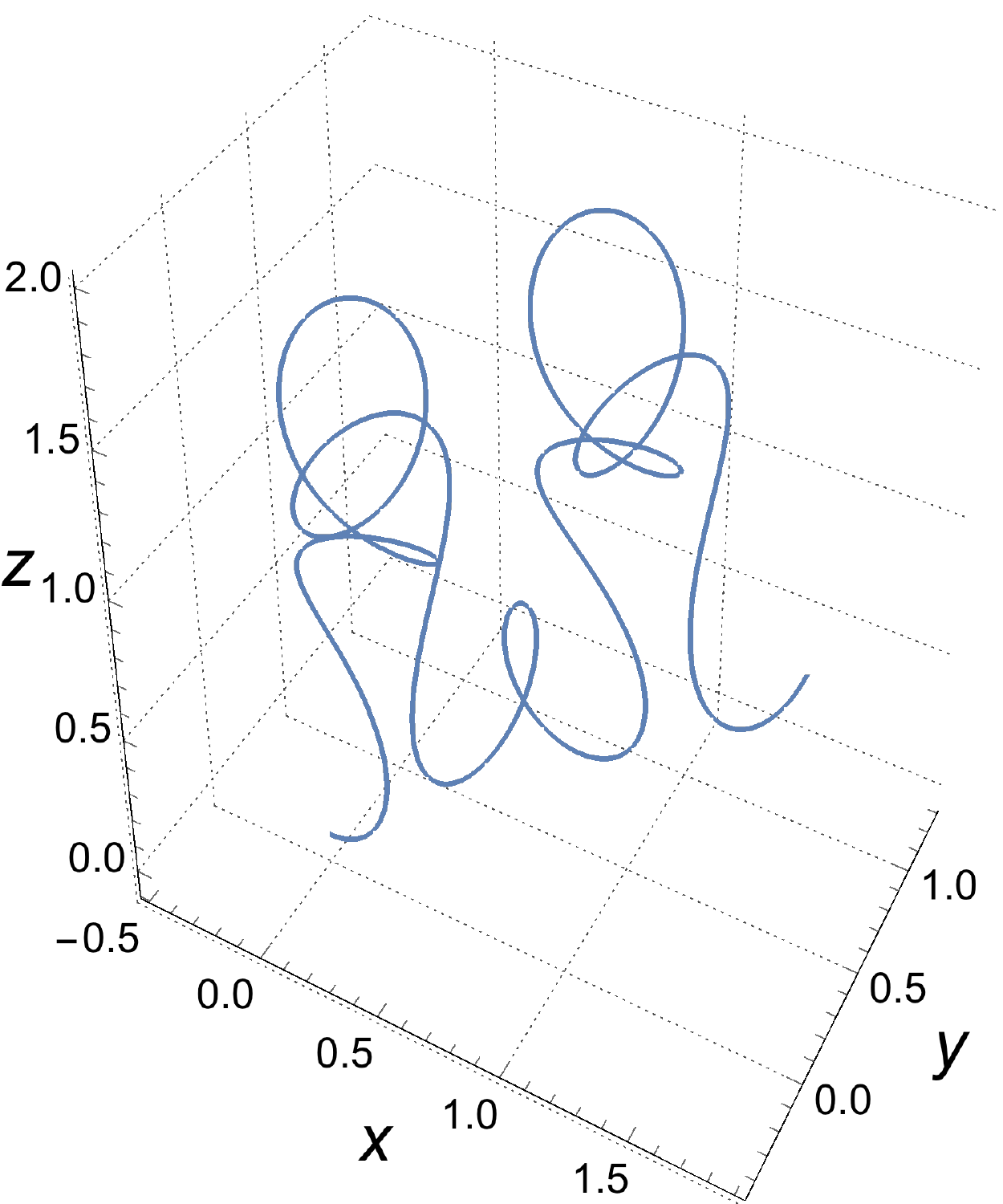}
		\caption{}
		\label{fig:k_tau_fig1_open}
	\end{subfigure}
	\begin{subfigure}{.48\linewidth}
		\centering
		\includegraphics[width=0.75\textwidth]{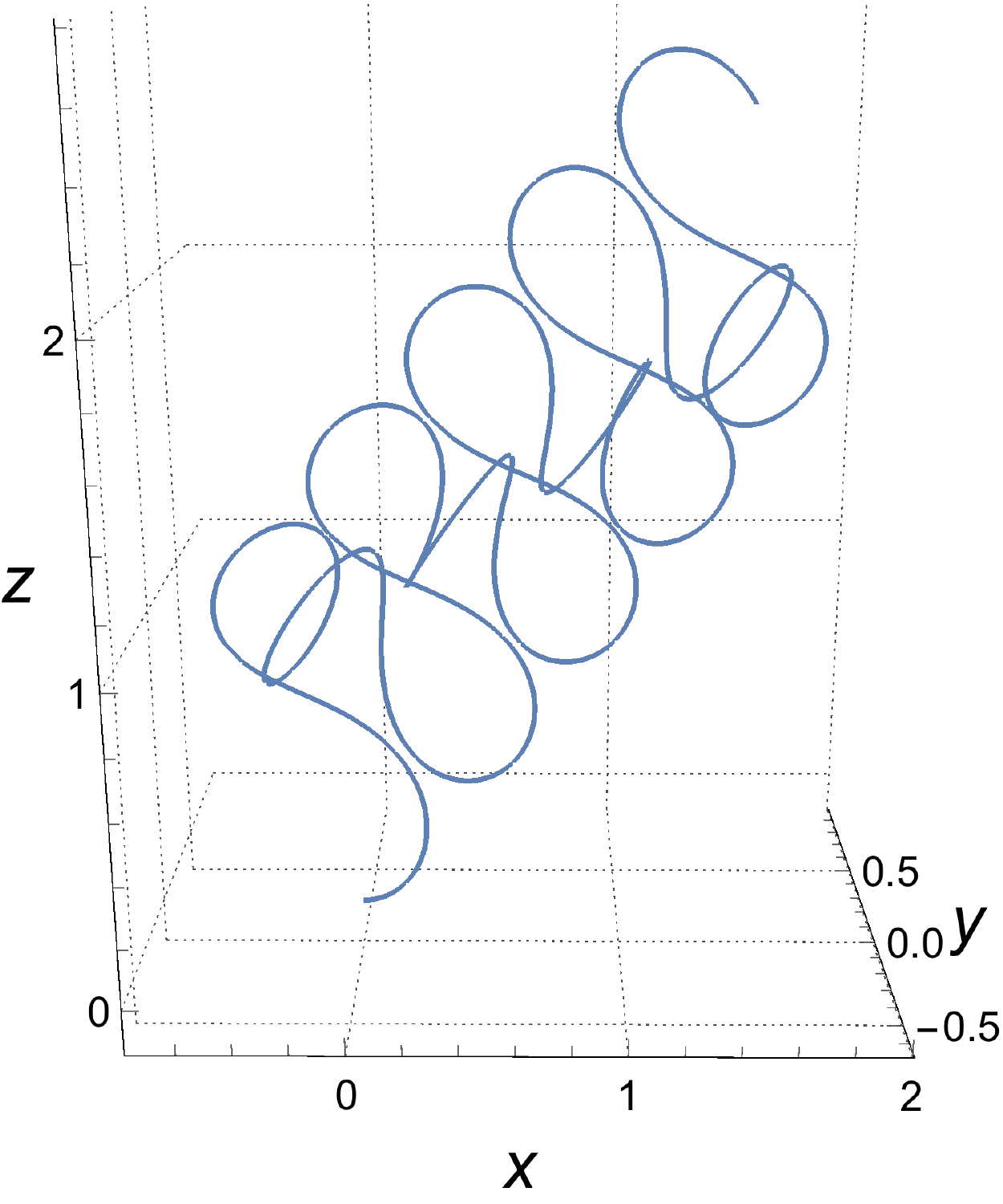}
		\caption{}
		\label{fig:k_tau_fig2_open}
	\end{subfigure}\\
	\begin{subfigure}{.48\linewidth}
		\centering
		\includegraphics[width=0.75\textwidth]{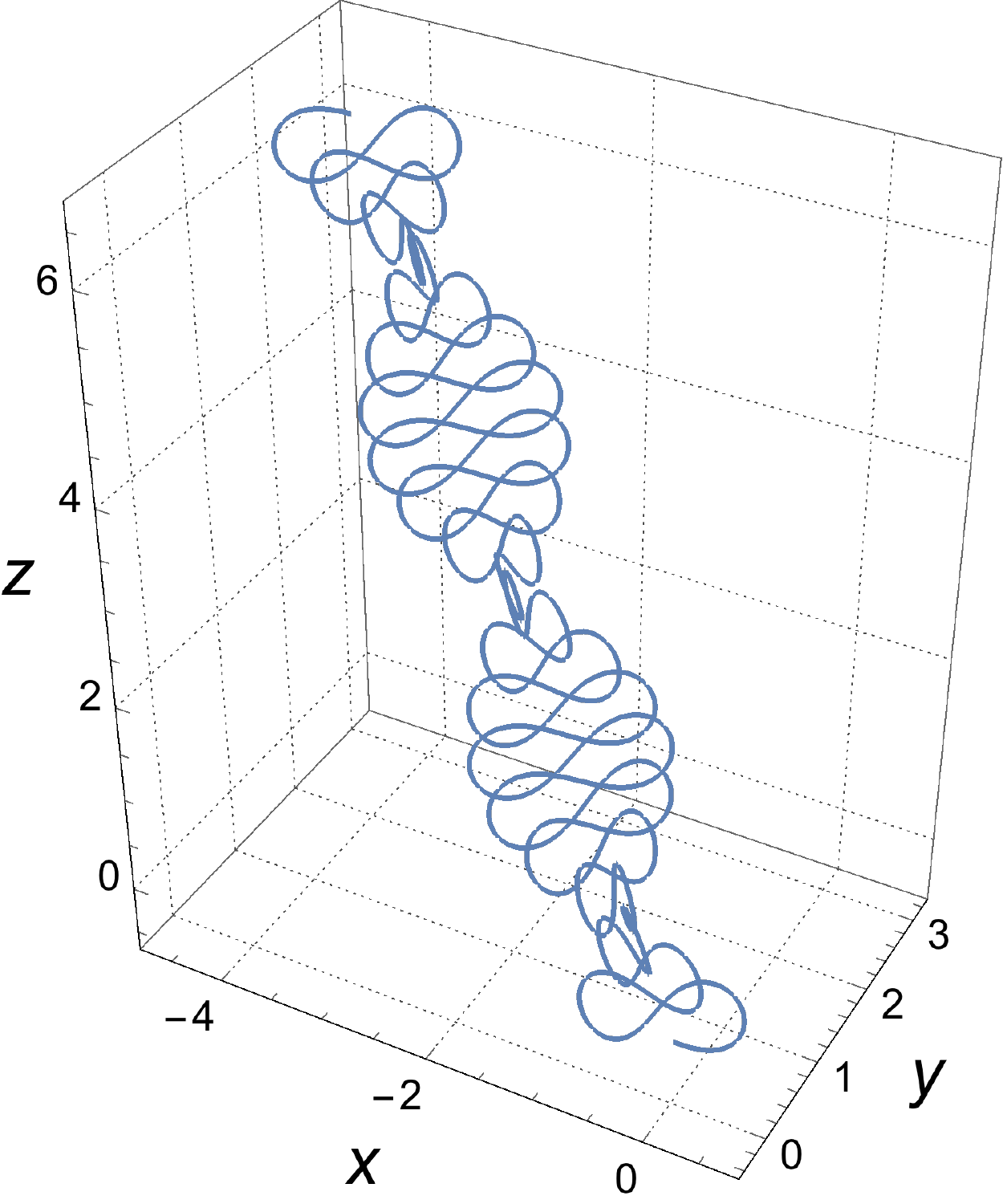}
		\caption{}
		\label{fig:k_tau_fig3_open}
	\end{subfigure}
	\begin{subfigure}{.48\linewidth}
		\centering
		\includegraphics[width=0.75\textwidth]{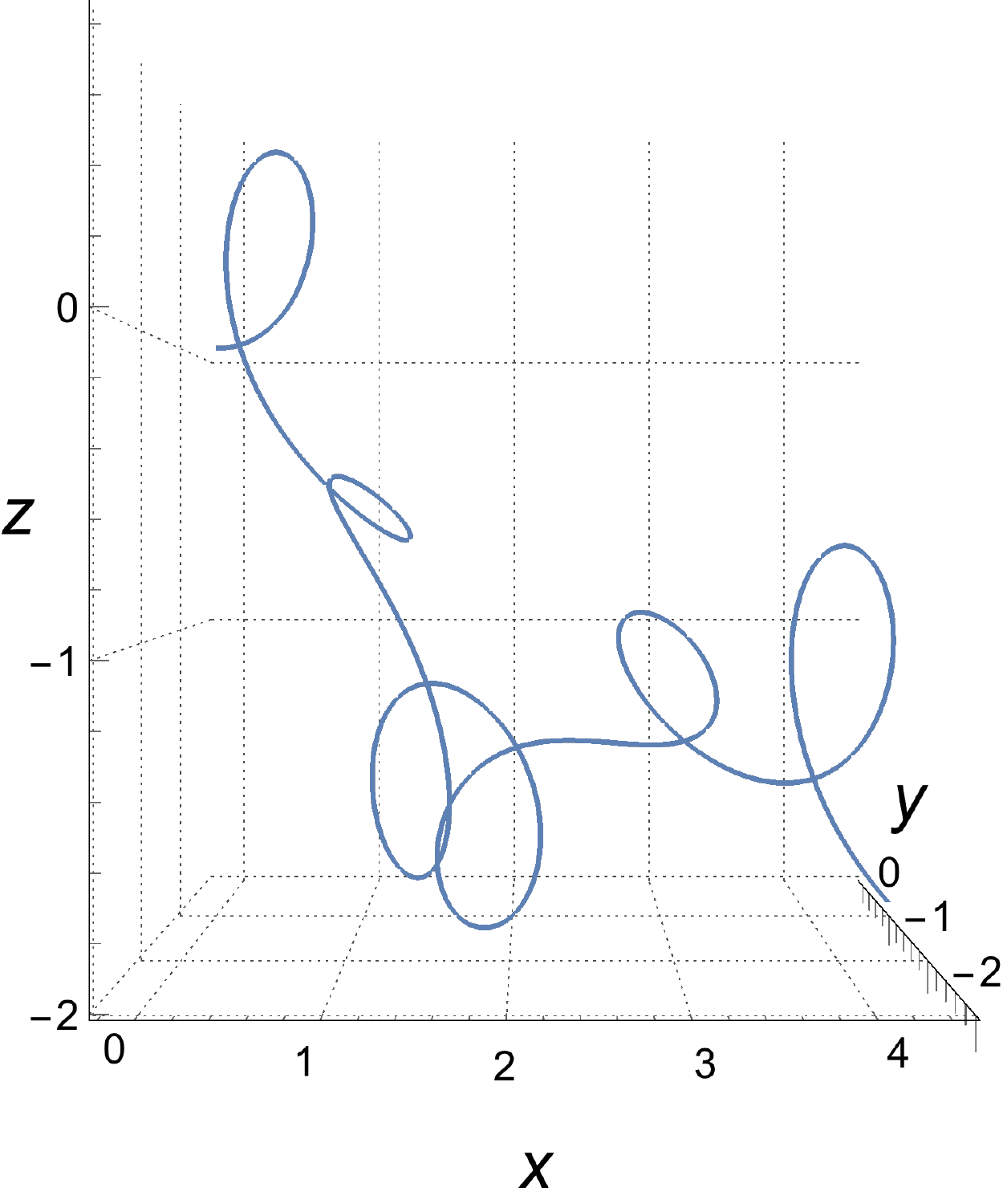}
		\caption{}
		\label{fig:k_tau_fig4_open}
	\end{subfigure}
	\caption{ Examples of open regular elasticae in the space.}
	\label{fig:k_tau_open}
\end{figure}

\begin{table}[t!]
	\centering 
	\begin{tabular}{|c|c|c|c|c|c|}
		\hline
		&$c_1$ & $c_2$ &$\k_0$ &$\k_1$ &$L $ \\ \hline
		
		Fig. \ref{fig:k_tau_fig1_open}
		&$6.85389$
		&$7.80699$
		&$3.97779$
		&$1.48377$
		&$6 \pi$
		\\ \hline
		Fig. \ref{fig:k_tau_fig2_open}
		&$3.76699$
		&$6.25666$
		&$3.78912$
		&$3.05338$
		&$8 \pi$
		\\ \hline
		Fig. \ref{fig:k_tau_fig3_open}
		&$0.700994$
		&$4.99512$
		&$0.478556$
		&$3.74012$
		&$25 \pi$
		\\ \hline
		Fig. \ref{fig:k_tau_fig4_open}
		&$0.794876$
		&$6.57172$
		&$0.899095$
		&$1.12241$
		&$7 \pi$
		\\ \hline
	\end{tabular}
	\caption{Numerical values chosen to plot the elasticae in Fig. \ref{fig:k_tau_open}.}\label{tab:val_k_tau_open}
\end{table}

\subsection{The model case in the biological applications}
\color{black}
One of the most common energies arising from the applications to Biophyiscs \cite{SH} takes the form 
\begin{equation}
\label{eq:fun_k2tau2}
\int_0^L\frac{\k^2+\tau^2}{2}\,ds.
\end{equation}
Following our notation, let 
\[
f(s,a,b)=f(a,b)=\frac{a^2+b^2}{2}.
\]
It is straightforward to see that $f$ satisfies all the assumptions \eqref{f0}-\eqref{f1}-\eqref{f2}-\eqref{f3}-\eqref{f4} with the choice $p=2$. As a consequence we have existence of minimizers. Of course, since plane curves have $\tau=0$ as in the case of the Euler elastica the circumference of length $L$ is still a minimizer. Concerning critical points, first of all, we notice that $f$ is smooth and we have $f_a=a$ and $f_b=b$. Then the system \eqref{elw} reads as 
\begin{equation}\label{eq:system_k_tau}
\left\{\begin{array}{lll}
\tau'=\mu\k,\\
{\bm\lambda} \cdot {\bm n}=-\k'-\mu\tau,\\
{\bm\lambda} \cdot {\bm b}=\mu'.
\end{array}\right.
\end{equation}
It is easy to see that the solutions $\k,\tau$ of that system are smooth and we can therefore apply Theorem \ref{main3}.  Then, on any interval $I$ where $\k\ne 0$ we have 
\begin{equation}\label{membranes}
\left\{\begin{array}{ll}
\displaystyle \tau \k'-\left(\frac{\tau'}{\k}\right)''+\frac{\tau^2\tau'}{\k}=0\\
\\
\displaystyle -\k\k'-\tau\tau'=\left(\frac{\k''}{\k}+\frac{2\tau}{\k}\left(\frac{\tau'}{\k}\right)'+\frac{(\tau')^2}{\k^2}\right)'.
\end{array}\right.
\end{equation}
These equations look quite difficult and hold true only when $\k\ne 0$. The only fact we put in evidence is that \eqref{membranes}$_2$ can be rewritten as 
\[
-\left(\frac{\k^2+\tau^2}{2}\right)'=\left(\frac{\k''}{\k}+\frac{2\tau}{\k}\left(\frac{\tau'}{\k}\right)'+\frac{(\tau')^2}{\k^2}\right)'\, ,
\]
that is 
\[
\frac{\k''}{\k}+\frac{2\tau}{\k}\left(\frac{\tau'}{\k}\right)'+\frac{(\tau')^2}{\k^2}+\frac{\k^2+\tau^2}{2}=c
\]
for some constant $c$. At the end we can reduce the analysis to 
\begin{equation}\label{membranes2}
\left\{\begin{array}{ll}
\displaystyle \tau \k'-\left(\frac{\tau'}{\k}\right)''+\frac{\tau^2\tau'}{\k}=0,\\
\\
\displaystyle \frac{\k''}{\k}+\frac{2\tau}{\k}\left(\frac{\tau'}{\k}\right)'+\frac{(\tau')^2}{\k^2}+\frac{\k^2+\tau^2}{2}=c.
\end{array}\right.
\end{equation}
Concerning the numerical analysis we tried to reproduce the same arguments to solve \eqref{membranes2}. However, we did not succeed in varying randomly the constant $c$ and the initial conditions for $\k,\tau$, since it is very likely that $\k = 0$ at some time, thus stopping the numerical procedure. We show the results in Fig.\,\ref{fig:k_tau_int_nozero} and in Table \ref{tab:k_tau_open}.

\begin{figure}[h!]
	\begin{subfigure}{.31\linewidth}
		\centering
		\includegraphics[width=\textwidth]{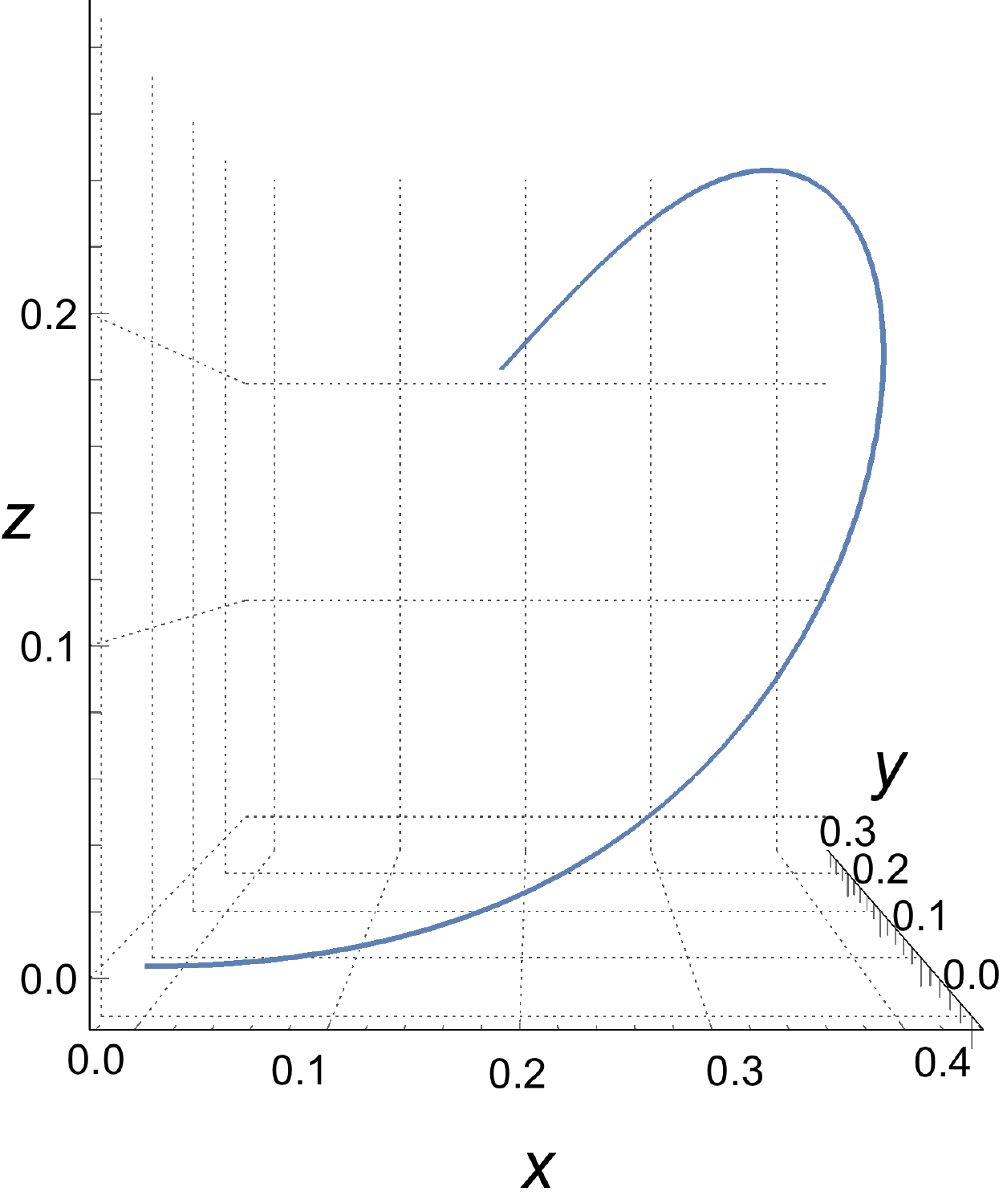}
		\caption{}
		\label{fig:k_tau_1}
	\end{subfigure}
	\begin{subfigure}{.31\linewidth}
		\centering
		\includegraphics[width=\textwidth]{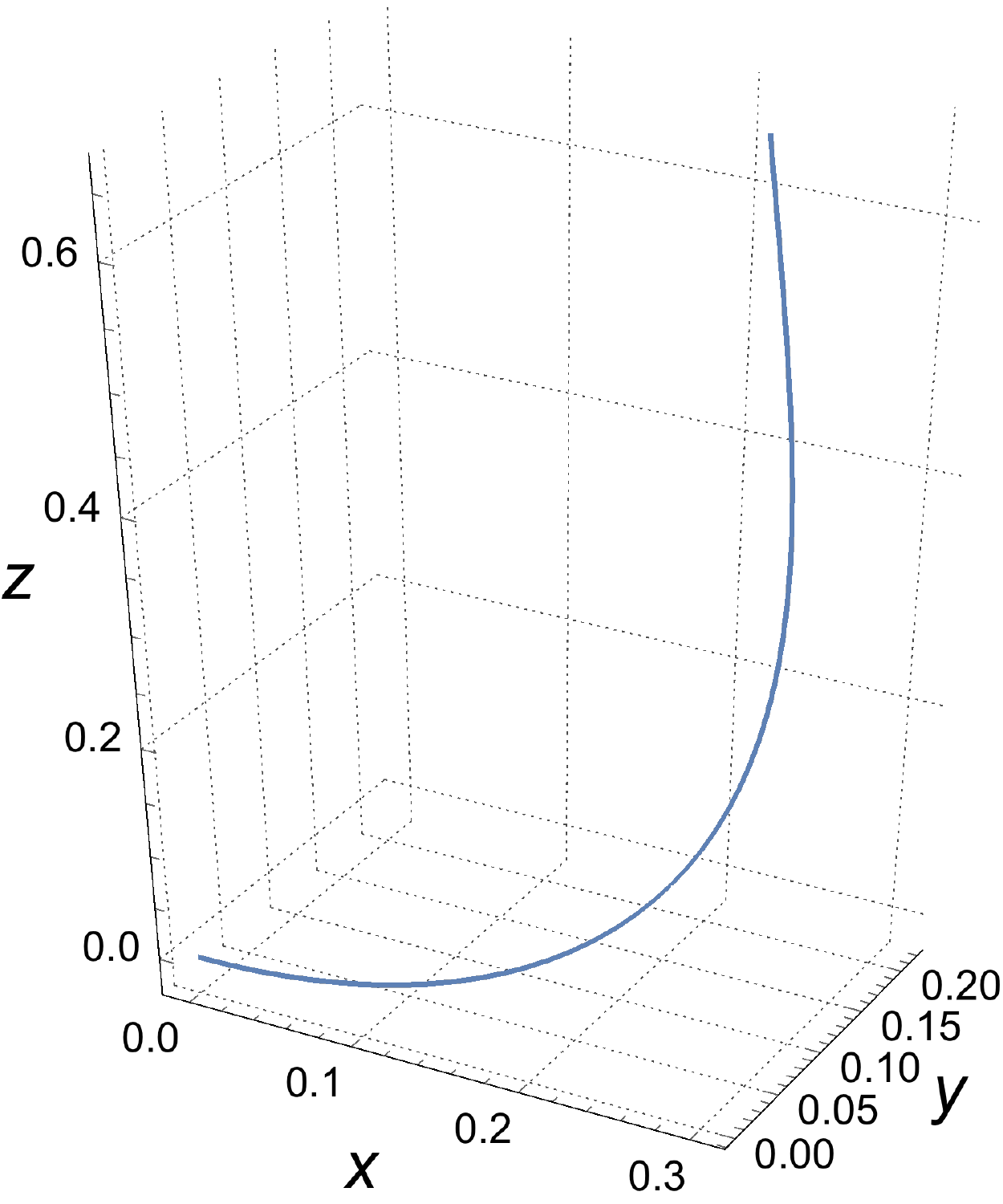}
		\caption{}
		\label{fig:k_tau_2}
	\end{subfigure}
	\begin{subfigure}{.31\linewidth}
		\centering
		\includegraphics[width=\textwidth]{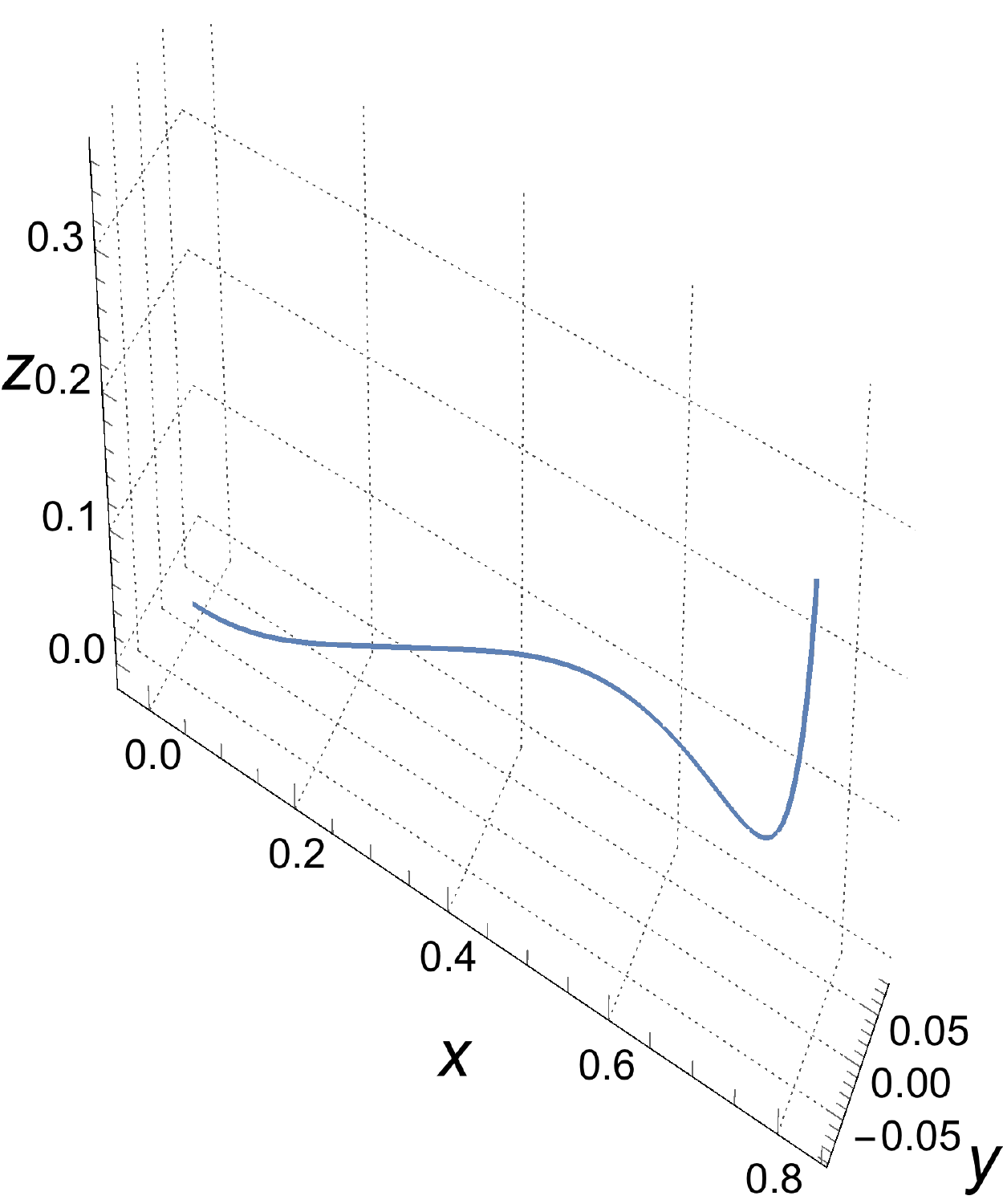}
		\caption{}
		\label{fig:k_tau_3}
	\end{subfigure}
	\caption{Some critical curves for the energy functional \eqref{eq:fun_k2tau2}.}
	\label{fig:k_tau_int_nozero}
\end{figure}

\begin{table}[t!]
	\centering 
	\begin{tabular}{|c|c|c|c|c|c|c|c|}
		\hline
		&$c$ & $\k_0$ &$\k_1$ &$\tau_0$ &$\tau_1$ &$\tau_2$ &$L$  \\ \hline
		
		Fig. \ref{fig:k_tau_1}
		&$-0.1$
		&$0.787616$
		&$3.33006$
		&$1.00144$
		&$ 4.69347$
		& $4.29121$
		& $0.929236143$
		\\ \hline
		Fig. \ref{fig:k_tau_2}
		&$0.01$
		&$3.05775$
		&$4.22982$
		&$0.749952$
		&$0.997559$
		& $3.02353$
		& $0.8734864103$
		\\ \hline
		Fig. \ref{fig:k_tau_3}
		&$1.03$
		&$1.95093$
		&$1.6048$
		&$8.21105$
		&$0.508862$
		& $3.25462$
		& $1.0579039889$
		\\ \hline
	\end{tabular}
	\caption{Numerical values chosen to plot the critical curves in  Fig.\,\ref{fig:k_tau_int_nozero}.}\label{tab:k_tau_open}
\end{table}

\subsection{The Sadowsky energy functional} \label{par:sadowsy}

 Following our notation, the Sadowsky energy functional takes the form 
\[
\int_0^L \frac{(\k^2+\tau^2)^2}{\k^2}\,ds.
\]
Some authors tried to give a rigorous justification of the limit process. We mention only the paper by Freddi et al.\,\cite{FHMP} (further references can be found therein), where the authors studied the $\Gamma$-limit of the bending energy on the M\"obius strip with respect to a topology that ensures the convergence of the minimizers. In this way, they obtain as $\Gamma$-limit the functional 
\[
\int_0^L f(\k,\tau)\,ds
\]
where $f\colon \R^2 \to \R$ is given by 
\begin{equation}\label{eq:sadowsky}
f(a,b)=\left\{\begin{array}{ll}
\displaystyle \frac{(a^2+b^2)^2}{a^2} & \text{if $|a|>|b|$},\\
\\
4b^2  & \text{if $|a|\le |b|$}.
\end{array}\right.
\end{equation}
Such a functional turns out to be the {\it corrected version} of the Sadowsky functional. It is easy to see that $f$ is continuous and convex: in order to see the convexity, we notice that that if $f_1\colon \mathbb R^2\setminus\{(a,b) \in \R^2 : a>0\} \to \R$ and $f_2 \colon \R^2 \to \R$ are given by
\[
f_1(a,b)= \frac{(a^2+b^2)^2}{a^2}, \quad f_2(a,b)=4b^2,
\]
then $f_1,f_2$ are both of class $C^1$ and convex and $\nabla f_1(a,b)=\nabla f_2(a,b)$ whenever $a=|b|>0$. Then, \eqref{f1} is satisfied. Moreover,  
\[
\frac{(a^2+b^2)^2}{a^2}=a^2+2b^2+\frac{b^4}{a^2}\ge a^2+2b^2 
\]
while if $|a|\le |b|$ we get $4b^2 \ge 2b^2+2a^2\ge a^2 +2b^2$. As a consequence, 
\[
f(a,b)\ge a^2+2b^2
\]
which shows \eqref{f2} with $p=2$. We are therefore in position to apply Theorem \ref{main1}, hence we have existence of minimizers. 

\begin{remark}
{\rm The system of critical points for the Sadowsky functional takes a very complicated form due to the fact that the energy density Eq. \eqref{eq:sadowsky} depends on the value of the curvature. Hence, we are not going to write explicitly the first-order conditions for minimizers since we think that the pure numerically investigation of this kind of system is out of the purpose of this paper.}
\end{remark}

\section*{Acknowledgments}
The authors wish to thank Antonio De Rosa, Maria Giovanna Mora and Alessandra Pluda for helpful suggestions and fruitful discussions. The work of GB was partially supported by GNFM-INdAM though the GNFM ``Progetto Giovani" 2020 {\em Transizioni di forma nella materia biologica e attiva}. The work of LL has been partially supported by INdAM - GNAMPA. AM acknowledges the support from MIUR, PRIN 2017 Research Project {\em ``Mathematics of active materials: from mechanobiology to smart devices".} The work of AM has been also partially supported by INdAM - GNFM.

\end{document}